\numberwithin{equation}{section}
\newtheorem{theorem}{Theorem}[section]
\newtheorem{lemma}{Lemma}[section]
\newtheorem{definition}{Definition}[section]
\newtheorem{example}{Example}[section]
\theoremstyle{remark}
\date{}
	\title{\textbf{Clairaut slant Riemannian maps to K\"ahler manifolds}}
\author{Jyoti Yadav, Gauree Shanker*\thanks{corresponding author, Email: gauree.shanker@cup.edu.in} and Murat Polat }
\begin{document}
\maketitle
\begin{abstract}
	The aim of this article is to describe the idea of  Clairaut slant Riemannian maps from  Riemannian manifolds to  K\"ahler manifolds. First, for the slant Riemannian map, we obtain the necessary and sufficient
	conditions for a curve to be a geodesic on the base manifold. Further, we find the necessary and sufficient conditions for the  slant Riemannian map to be a Clairaut slant Riemannian map; for Clairaut slant Riemannian map to be totally geodesic; for the base manifold to be a locally product manifold. Further, we obtain the necessary and sufficient condition for the integrability of range of derivative map. Also, we investigate the harmonicity of Clairaut slant Riemannian map. Finally, we get two inequalities in terms of second fundamental form of a Clairaut slant Riemannian map and check the equality case.
\end{abstract}

\noindent\textbf {M. S. C. 2020:} 53B20, 53B35.\\
\textbf{Keywords:}  K\"ahler manifold, Clairaut submersion, Riemannian map, slant Riemannian map, Clairaut Riemannian map.

\section{Introduction}
To understand the relation between the geometric structures of two Riemannian manifolds, smooth maps are helpful. Therefore there is a need to define more maps for comparing the geometric properties between two Riemannian manifolds. Riemannian submersions and isometric immersions  are the two such types of basic
maps. A smooth map $\pi$ between Riemannian manifolds $(P, g_P)$ and $(Q, g_Q)$ is said to be an
isometric immersion if the differential map $\pi_*$ is one-one and satisfies the condition
$g_Q(\pi_*X, \pi_*Y) = g_P (X, Y)$ for $X, Y \in \Gamma(TP).$ 
O'Neill \cite{S10} and Gray \cite{S35} addressed Riemannian submersion
and O'Neill derived the fundamental equations for Riemannian submersion, which are helpful to study
the geometry of Riemannian manifolds. A smooth map $\pi$ between Riemannian
manifolds $P$ and $Q$ is called a Riemannian submersion if $\pi_*$ is surjective  and it preserves the length of the horizontal vector field. In \cite{S28, S27}, the geometry of Riemannian submersion was investigated.\\ 
In \cite{S3}, Fischer proposed the idea of Riemannian maps between Riemannian manifolds which are the generalizations of Riemannian submersions and isometric immersions.
A notable characteristic of Riemannian maps is that  Riemannian maps satisfy the generalized
eikonal equation  $||\pi_*||^2= rank\pi$  which connects geometric optics and physical
optics. Fischer also demonstrated how Riemannian maps can be used to build various quantum
models of nature. The geometry of Riemannian maps were investigated in \cite{S29, S23, S30}. There are several kinds of submanifolds which depend on how a tangent bundle of a 
submanifold responds to the influence of the complex structure J$'$ of the ambient
manifold, namely, K\"ahler submanifolds, CR-submanifolds, totally real submanifolds, generic submanifolds,
slant submanifolds, pointwise slant submanifolds, semi-slant submanifolds, and hemi-slant
submanifolds. Chen \cite{S15} introduced the idea of slant submanifolds of an almost Hermitian manifold, and according to this idea, slant submanifolds include totally real and holomorphic submanifolds.
In addition, \c{S}ahin \cite{S16} provided the concept of slant Riemannian map which is  a generalization of slant immersions ( totally real immersions
and holomorphic immersions), invariant Riemannian maps and anti-invariant Riemannian
maps.\\ The idea of Clairaut's relation comes from elementary differential	 geometry.
According to Clairaut theorem, let $\rho$ be the
distance of the surface from the axis of rotation and let $\theta$ be the angle between the meridian and velocity
vector of the geodesic on the surface then $\rho sin\theta$ is constant. Bishop \cite{S5}  generalized this idea on submersion theory and introduced Clairaut submersion. A submersion $\pi: P \rightarrow Q$ is said to be a Clairaut submersion if there is a function $\rho : P \rightarrow R^+$ such that for every geodesic, making an angle $\theta$ with the horizontal subspace then $\rho sin\theta$ is constant. Further, Clairaut submersion has been studiedin \cite{S31} and in many other spaces viz. Lorentzian spaces, timelike and spacelike spaces \cite{S14, S11, S13, S34}. Since then, submersions have been defined on different aspects. A Clairaut submersion is a helpful tool for establishing decomposition theorems on Riemannian manifolds.  Many submersions are based on the behavior of tangent bundle of ambient space and submanifolds. Watson \cite{S6} introduced the concept of almost Hermitian submersion. \c{S}ahin \cite{S8} introduced holomorphic Riemannian maps between almost Hermitian manifolds, which is a generalization of holomorphic submanifolds and holomorphic submersions. Additionally, the idea of a Riemannian map has been examined from different points of view viz. anti-invariant \cite{S4}, semi-invariant \cite{S7} , slant Riemannian maps \cite{S16} from a Riemannian manifold to a K\"ahler manifold. Further, conformal anti-invariant, semi-invariant Riemannian map from Riemannian manifold to K\"ahler manifold were introduced in \cite{S26, S32}. The Clairaut Riemannian maps were introduced in \cite{S2}, \cite{S25} and \cite{Kiran_Thesis}. Recently, Meena et. al have introduced Clairaut invariant \cite{S19}, anti-invariant \cite{S20}, and semi-invariant \cite{S21} Riemannian maps between Riemannian manifolds and K\"ahler manifolds.\\ 
In this article, we discuss about Clairaut slant Riemannian maps.
The paper is structured as follows: In section 2, we discuss several fundamental terms, definitions, and informations required for this paper. In section 3, we define Clairaut slant
Riemannian map from a Riemannian manifold to a K\"ahler manifold.
Further, we obtain a necessary and sufficient condition for a slant
Riemannian map to be Clairaut. Additionally, we find a necessary and sufficient condition for the Clairaut slant Riemannian map to be totally geodesic. Next we find harmonicity of Clairaut slant Riemannian map. Along with this, we obtain inequalities of Clairaut slant Riemannian map and check the inequality case.
Finally, we provide a non-trivial example for existence of such Clairaut slant Riemannian
map.

\section{Preliminaries}
Let $\pi$ be a smooth map between two Riemannian manifolds $(P, g_P)$ and $(Q, g_Q)$ of dimension $p, q$ respectively, such that $rank \pi \leq\min\{p,q\}$. Let $\mathcal{V}_r = ker\pi_{*r}$ at $r\in P,$ stands for vertical distribution or kernel space of $\pi_*$ and $\mathcal{H}_r = (ker\pi_{*r} )^\perp$ in $T_rP$ is the orthogonal complementary space of $\mathcal{V}_r$. 
Then the tangent space $T_rP$ at $r\in P$  has the decomposition,\\
$$ T_rP = (ker\pi_{*r}) \oplus (ker\pi_{*r} )^\perp = \mathcal{V}_r \oplus \mathcal{H}_r.$$
Let the range of $\pi_*$ be denoted by $range\pi_{*r}$ at $r\in P$, and $(range\pi_{*r})^\perp$ be the orthogonal complementary space of $range\pi_{*r}$ in the tangent space $T_{\pi(r)}Q$ of $Q$ at $\pi(r)\in Q$. Since $rank\pi \leq\min\{p,q\}$, this gives $(range\pi_{*r})^\perp\neq {0}$. Thus, the tangent space $T_{\pi(r)}Q$ of $Q$ at $\pi(r)\in Q$ has the following decomposition:
\begin{equation*}
	T_{\pi(r)}Q = (range\pi_{*r})\oplus (range\pi_{*r})^\perp.
\end{equation*}
Now, a smooth map $\pi : (P, g_P)\rightarrow (Q, g_Q)$ is said to be a Riemannian map at $r_1\in P$ if the horizontal restriction
$\pi^h_{r_1}:(ker\pi_{*r_{1}})^\perp\rightarrow (range\pi_{*r_{1}})$ is a linear isometry between the inner product spaces $((ker\pi_{*r_{1}})^\perp, g_Q{(r_{1})}| (ker\pi_{*r_{1}})^\perp )$ and $((range\pi_{*r_1}, g_Q (r_{2}) |(range\pi_{*r_1})), r_2 = \pi(r_{1})$. In another words, $\pi_*$
satisfies the equation\\
\begin{equation}{\label{N1}}
	g_Q (\pi_{*}X, \pi_{*}Y) = g_P (X, Y), \forall X, Y \in\Gamma(ker\pi_*)^\perp.
\end{equation}
It is observed  that Riemannian submersions  and isometric immersions are particular case of  Riemannian maps with $(range\pi_{*r})^\perp = 0$  and $ker\pi_{*r} = 0$, respectively.\\
For any vector field $X$ on $P$ and any section $V$ of $(range\pi_{*})^\perp$, we define $\nabla^{\pi \perp}_ {X} V$ as the orthogonal projection of $\nabla^{Q}_{X} V$ on $(range\pi_{*})^\perp$.\\\\
From now,  we denote by $\nabla^Q$ Levi-Civita connection for
$(Q, g_Q)$ and $\nabla^{Q_\pi}$ pullback connection along $\pi$.
Next, suppose that $\pi$ is a Riemannian map and define $S_V$ as \cite{S23}
\begin{equation}\label{S_V}
	\nabla^{Q}_{\pi_{*}X}V = -S_V{\pi_{*}X} + \nabla^{\pi \perp}_ {X} V,
\end{equation}
where $S_V{\pi_{*}X}$ is the tangential component and  $\nabla^{\pi \perp}_ {X} V$ the orthogonal component  of $\nabla^{Q}_{\pi_{*}X}V$.
It can be easily seen that $\nabla^{Q}_{\pi_{*}X}V$ is obtained from the pullback connection of $\nabla^{Q}$. Thus, at $r\in P$, we have 
$\nabla^{Q}_{\pi_{*}X}V(r) \in T_{\pi(r)}Q, S_V{\pi_{*}X}\in\pi_{*r}(T_{r}P)$ and $\nabla^{\pi \perp}_ {X} V\in (\pi_{*r}(T_{r}P))^\perp$. It follows that $S_V{\pi_{*}X}$ is bilinear in $V$ and $\pi_{*}X$ and $S_V{\pi_{*}X}$  at $r$ depends only on $V_{r}$ and $\pi_{*r}X_{r}$.\\  By direct computations, we obtain 
\begin{equation}\label{Sv}
	g_Q (S_V{\pi_{*}X, {\pi_{*}Y}}) = g_Q \big(V, (\nabla \pi_{*})(X, Y)\big),\\ 
\end{equation} 
for all X, Y $\in \Gamma(ker\pi_{*})^\perp~and ~V\in\Gamma(range\pi_{*})^\perp.$
Since $(\nabla \pi_{*})$ is symmetric, it follows
that $S_V$ is a symmetric linear transformation of $range \pi_{*}$.  \\\
Let $\pi : (P, g_P) \rightarrow (Q, g_Q)$ be a smooth map between manifolds $(P, g_P)$ and $(Q, g_Q)$. The second fundamental form of $\pi$ is the map \cite{S22}
\begin{equation*}
	\nabla \pi_{*} : \Gamma(TP)\times \Gamma(TP) \rightarrow \Gamma_{\pi}(TQ)\\
\end{equation*}
defined by
\begin{equation}\label{SFF}
	(\nabla \pi_{*}) (X, Y) = \nabla^{Q_{\pi}}_{X}{\pi_{*}}Y - \pi_{*}(\nabla^P_{X} Y),
\end{equation}
where  $\nabla^P$ is a linear connection on $P$.\\
For any $X, Y \in \Gamma(ker\pi_{*})^\perp,$ \c{S}ahin\cite{S4} showed that the second fundamental form $(\nabla \pi_{*}) (X, Y)$   of a Riemannian
map has no components in $range\pi_{*}$. It means 
\begin{equation}
	(\nabla \pi_{*}) (X, Y)\in \Gamma(range \pi_{*})^\perp.\\\\
\end{equation}
Trace of second fundamental form $\pi$ is called tension field \cite{S24}. It is denoted by $\tau(\pi)$ and defined as $\tau(\pi) = trace(\nabla \pi_{*}) = \sum_{i=1}^{m}(\nabla \pi_{*})(e_{i}, e_{i}).$
A map $\pi$ is called a harmonic map \cite{S24} if it has a vanishing tension field, i.e., $\tau(\pi) = 0.$\\
The adjoint map *$\pi_{*r}$ at $r\in P $ of the map $\pi$ is defined by
\begin{equation}
	g_Q(\pi_{*p}(X), W) = g_P(X,*\pi_{*r}(W))\\ 
\end{equation}
for $X \in T_rP$ and $W\in T_{\pi(r)}Q$, where $\pi_{*r}$ is the derivative of $\pi$ at $r\in P$.\\\\
\begin{lemma}\label{Lemma}\cite{S7} Let $\pi : (P, g_P ) \rightarrow (Q, g_Q )$ be a Riemannian map between Riemannian manifolds. Then $\pi$ is umbilical Riemannian map if and only if $(\nabla \pi_*)(X, Y ) = g_P (X, Y )H$ for all $X, Y \in \Gamma (ker\pi_*)^\perp$ and $H$ is, nowhere zero, mean curvature vector field on $(range\pi_*)^\perp$. 
\end{lemma}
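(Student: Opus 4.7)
The plan is to unpack the definition of an \emph{umbilical Riemannian map} as the condition that the shape operator $S_V$ introduced in (\ref{S_V}) acts as a scalar multiple of the identity on $range\,\pi_*$ for every $V\in\Gamma(range\,\pi_*)^\perp$, that is, $S_V\pi_*X=\lambda(V)\,\pi_*X$ for all horizontal $X$, with $\lambda(V)$ depending linearly on $V$. The identification $(\nabla\pi_*)(X,Y)=g_P(X,Y)H$ will then follow from the key identity (\ref{Sv}) together with a Riesz-representation argument, and the labelling of $H$ as the mean curvature comes from taking a trace.

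For the direction $(\Leftarrow)$, I would assume $(\nabla\pi_*)(X,Y)=g_P(X,Y)H$ and substitute into (\ref{Sv}) to obtain
\[
g_Q(S_V\pi_*X,\pi_*Y)=g_P(X,Y)\,g_Q(V,H)=g_Q\bigl(g_Q(V,H)\,\pi_*X,\,\pi_*Y\bigr),
\]
where the last equality uses (\ref{N1}). Letting $Y$ range over $\Gamma(ker\pi_*)^\perp$ and invoking non-degeneracy of $g_Q$ on $range\,\pi_*$ forces $S_V\pi_*X=g_Q(V,H)\,\pi_*X$, which is the umbilical condition.

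For the direction $(\Rightarrow)$, starting from $S_V\pi_*X=\lambda(V)\,\pi_*X$, plugging into (\ref{Sv}) and again using (\ref{N1}) gives $g_Q\bigl(V,(\nabla\pi_*)(X,Y)\bigr)=\lambda(V)\,g_P(X,Y)$. Because $V\mapsto\lambda(V)$ is linear on the fibres of $(range\,\pi_*)^\perp$, Riesz representation produces a unique section $H\in\Gamma(range\,\pi_*)^\perp$ with $\lambda(V)=g_Q(V,H)$. Since $(\nabla\pi_*)(X,Y)$ already lies in $\Gamma(range\,\pi_*)^\perp$ (by the observation right after (\ref{SFF})), non-degeneracy on that subspace yields $(\nabla\pi_*)(X,Y)=g_P(X,Y)H$. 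Finally, tracing with an orthonormal basis $\{e_1,\dots,e_k\}$ of $(ker\pi_*)^\perp$ gives $\sum_{i=1}^{k}(\nabla\pi_*)(e_i,e_i)=kH$, so $H$ agrees up to the positive factor $k=rank\,\pi$ with the horizontal mean curvature vector of $\pi$; the nowhere-vanishing hypothesis on $H$ merely excludes the totally geodesic degenerate case.

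The principal obstacle here is definitional rather than computational: one must first fix a notion of umbilical Riemannian map that specialises correctly to an umbilical isometric immersion (where the classical shape operator is scalar) and to an umbilical submersion. Once the ``$S_V$-is-scalar'' convention is adopted, the argument reduces to two applications of (\ref{Sv}) and (\ref{N1}) together with a Riesz representation; no intricate estimate or curvature identity is required.
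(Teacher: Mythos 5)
The paper states this lemma without proof, citing \cite{S7}, so there is no in-text argument to compare against; judged on its own, your proposal is correct and is essentially the standard duality argument from the cited source. Both directions rest, as you say, on combining (\ref{Sv}) with (\ref{N1}) and the non-degeneracy of $g_Q$ restricted to $range\,\pi_*$ and to $(range\,\pi_*)^\perp$, and your Riesz-representation step in the forward direction is legitimate because $S_V\pi_*X$ is bilinear in $V$ and $\pi_*X$, as the paper records after (\ref{S_V}), while $(\nabla\pi_*)(X,Y)\in\Gamma(range\,\pi_*)^\perp$ by the remark following (\ref{SFF}). The only cosmetic slip is at the end: with the usual normalization $H=\frac{1}{k}\sum_{i}(\nabla\pi_*)(e_i,e_i)$, $k=rank\,\pi$, your $H$ \emph{is} the mean curvature vector field of $range\,\pi_*$ exactly, not merely up to the factor $k$.
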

Let $(P,g_P)$ be an almost Hermitian manifold \cite{S1}, then $P$ admits a
tensor $J'$ of type (1,1) such that $J'^2 = -I$ and
\begin{equation}\label{COHM}
	g_P(J'X, J'Y) = g_P(X, Y)
\end{equation}
for all $X, Y \in \Gamma(TP).$ An almost Hermitian manifold $P$ is called K\"ahler manifold if
\begin{equation}\label{COKM}
	(\nabla_{X}J')Y = 0,\\
\end{equation}
for all $X, Y \in \Gamma(TP)$, where $\nabla$ is Levi-Civita connection on $P$.
\begin{definition}
	\cite{S16}
	Let $\pi$ be a Riemannian map from a Riemannian manifold $(P, g_P)$
	to an almost Hermitian manifold $(Q, g_Q, J')$. If for any non-zero vector $X \in \Gamma(ker\pi_*)^\perp,$
	the angle $\theta (X)$ between $J'\pi_*(X)$ and the space $range\pi_*$ is a constant, i.e., it is independent of the choice of the point $r \in P$ and choice of the tangent vector $\pi_*(X)$ in
	$range\pi_*$, then we say that $\pi$ is a slant Riemannian map. In this case, the angle $\theta$ is called the slant angle of the slant Riemannian map.
\end{definition}
Let $\pi$ be a Riemannian map from a Riemannian manifold $(P, g_P)$ to an almost Hermitian manifold $(Q, g_Q, J' )$. Then for $\pi_*Y \in \Gamma(range\pi_*),
Y \in \Gamma(ker \pi_*)^\perp$,
we have
\begin{equation}\label{DJF}
	J'\pi_*Y = \alpha \pi_*Y + \delta \pi_*Y,  	
\end{equation}
where $\alpha \pi_*Y \in \Gamma(range\pi_*)$ and $\delta \pi_*Y \in \Gamma (range\pi_*)^\perp$.
Also for $U \in \Gamma(range\pi_*)^\perp,$
we have
\begin{equation}\label{DJV}
	J'U = BU + CU,
\end{equation}
where $BU \in \Gamma (range\pi_*)$ and $CU \in \Gamma(range\pi_*)^\perp.$\\
The idea of Clairaut Riemannian map is based on geodesic of surface of revolution. \c{S}ahin \cite{S2} defined Clairaut Riemannian map  by using geodesics on total manifolds.
A Riemannian map $\pi : P \rightarrow Q$ between Riemannian manifolds
$(P, g_P )$ and $(Q, g_Q)$ is called a Clairaut Riemannian map if there is a function $s : P \rightarrow R^+$ such that for every geodesic, making angles $\theta$ with the horizontal subspaces, $ssin\theta$ is constant.\\
Similarly, Clairaut Riemannian map has been defined by using geodesic on base manifold as follows.
\begin{definition}\label{CRMps}\cite{S25}
	A Riemannian map $\pi : (P, g_P ) \rightarrow (Q, g_Q )$ between Riemannian manifolds is called Clairaut Riemannian map if there is a function $s: Q \rightarrow R^+$ such that for every geodesic $\eta$ on $Q$ , the function 
	$(s o \eta) sin \omega(t)$  is constant, where $\pi_*X \in \Gamma(range\pi_*)$ and $U \in \Gamma(range \pi_*)^\perp$ are the vertical and horizontal components of $\dot\eta(t)$, and $\omega(t)$ is the angle between $\dot\eta(t)$ and $U$ for all t.
\end{definition}
\begin{theorem}\cite{S25}\label{NSC}
	Let $\pi : (P, g_P ) \rightarrow (Q, g_Q)$ be a Riemannian map between Riemannian manifolds such that $(range\pi_*)^\perp$ is totally geodesic and $range\pi_*$ is connected, and let $\beta, \eta = \pi o \beta$ be geodesics on $P$ and $Q,$ respectively.Then $\pi$ is Clairaut Riemannian map with $s = e^f$ if and only if any one of the following conditions holds: 
\end{theorem}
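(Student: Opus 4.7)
The plan is to translate the Clairaut identity $(s\circ\eta)\sin\omega(t)$ constant, with $s=e^f$, into an infinitesimal ODE along $\eta$, and then to invoke the geodesic equation, the $S$-operator machinery of (\ref{S_V})--(\ref{Sv}), and the hypothesis that $(range\pi_*)^\perp$ is totally geodesic, in order to recognize the equivalent intrinsic conditions claimed.

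First, along the geodesic $\eta$ I would decompose $\dot\eta(t)=V(t)+U(t)$, where $V(t)\in range\pi_{*\beta(t)}$ and $U(t)\in(range\pi_{*\beta(t)})^\perp$. Because $\eta$ is a geodesic on $Q$, the speed $c:=|\dot\eta|$ is constant, and the angle $\omega$ between $\dot\eta$ and its horizontal component $U$ satisfies $\sin\omega(t)=|V(t)|/c$. Hence $(s\circ\eta)\sin\omega$ is constant in $t$ if and only if $e^{f\circ\eta}|V|$ is constant in $t$. Differentiating and using $\tfrac{1}{2}\tfrac{d}{dt}|V|^2=g_Q(\nabla^Q_{\dot\eta}V,V)$, this is equivalent to the pointwise identity
\begin{equation*}
	\dot\eta(f)\,g_Q(V,V)+g_Q(\nabla^Q_{\dot\eta}V,V)=0.
\end{equation*}

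Next I would simplify $g_Q(\nabla^Q_{\dot\eta}V,V)$. The geodesic equation $\nabla^Q_{\dot\eta}(V+U)=0$ gives $\nabla^Q_{\dot\eta}V=-\nabla^Q_VU-\nabla^Q_UU$. Writing $V=\pi_*X$ for a suitable horizontal $X$, the formula (\ref{S_V}) yields $\nabla^Q_VU=-S_UV+\nabla^{\pi\perp}_XU$, while the totally-geodesic hypothesis on $(range\pi_*)^\perp$ forces $\nabla^Q_UU\in(range\pi_*)^\perp$. Taking the $g_Q$-inner product with $V\in range\pi_*$ annihilates both normal terms and leaves $g_Q(\nabla^Q_{\dot\eta}V,V)=g_Q(S_UV,V)$. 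Substituting this back gives a single identity linking $\dot\eta(f)$, $V$, and $U$ through the $S$-operator. Because $\beta$ ranges over all geodesics on $P$ and $range\pi_*$ is connected, the pair $(V,U)$ can be prescribed arbitrarily at any point, so this pointwise identity is precisely equivalent to each of the stated intrinsic conditions; the passage between the various equivalent formulations is routine and proceeds via the symmetry of $S$ established in (\ref{Sv}) and, where needed, the adjoint $*\pi_*$.

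The main obstacle I anticipate is careful bookkeeping along the curve: $V$ and $U$ are defined only along $\eta$, so each covariant derivative must be interpreted as a derivative along $\eta$, and any extension to genuine vector fields on $Q$ produces extra terms that must be shown to drop out upon projection. The totally-geodesic hypothesis on $(range\pi_*)^\perp$ must be applied at exactly the step that kills the normal components of $\nabla^Q_{\dot\eta}V$, and this is the technical pivot of the argument; everything else is algebraic manipulation of the Riemannian-map data.
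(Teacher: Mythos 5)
First, a point of comparison: the paper does not prove Theorem \ref{NSC} at all --- it is imported verbatim from \cite{S25} --- so the only in-paper benchmark is the analogous computation in the proof of Theorem 3.2, and your skeleton matches it. The reduction of the Clairaut condition to $\dot\eta(f)\,g_Q(V,V)+g_Q(\nabla^Q_{\dot\eta}V,V)=0$, the use of the geodesic equation to write $\nabla^Q_{\dot\eta}V=-\nabla^Q_VU-\nabla^Q_UU$, and the identification $g_Q(\nabla^Q_{\dot\eta}V,V)=g_Q(S_UV,V)$ via \eqref{S_V} together with the totally geodesic hypothesis on $(range\,\pi_*)^\perp$ are all correct, and you locate the use of each hypothesis accurately.

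The gap is the final sentence claiming the passage from your pointwise identity to conditions (i) and (ii) is ``routine''; that passage is the actual content of the theorem. Your identity is $g_Q(S_UV,V)=-\dot\eta(f)\,g_Q(V,V)$ with $\dot\eta(f)=V(f)+U(f)$, while condition (i) asserts $S_UV=-U(f)V$; these are not equivalent as they stand. You must first eliminate the tangential term $V(f)$ --- for instance by noting that $S_U$ is linear in $U$ while $V(f)$ is independent of $U$, so rescaling the normal component of the initial velocity forces $V(f)\,g_Q(V,V)=0$, i.e.\ $\nabla^Qf\in\Gamma(range\,\pi_*)^\perp$. This fact is also indispensable for the converse direction: condition (i) alone does not return the Clairaut identity without it. Second, passing from the scalar identity $g_Q(S_UV,V)=-U(f)\,g_Q(V,V)$, valid for all $V$, to the operator identity $S_UV=-U(f)V$ requires polarization in $V$ together with the symmetry of $S_U$ recorded in \eqref{Sv}. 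Third, you do not address the equivalence with condition (ii) (umbilicity with $H=-\nabla^Qf$, which follows from \eqref{Sv} and Lemma \ref{Lemma}), nor where the connectedness of $range\,\pi_*$ is used (it is what allows the pointwise data to be integrated to a single global function $f$). Finally, the degenerate instants where $V=0$, at which your multiplication by $\lvert V\rvert$ loses information, should be handled by a continuity argument.
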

\begin{itemize}
	\item[(i)] $S_V \pi_*X = -V(f) \pi_*X$, where $\pi_*X \in\Gamma (range\pi_*)$ and 
	$V \in\Gamma(range\pi_*)^\perp$ are components of  $\dot{\eta}(t)$. 
	\item [(ii)] $\pi$ is umbilical map, and has $H = -\nabla^Q f$ , where $g$ is a smooth function on $Q$ and $H$ is the mean curvature vector field of $range\pi_*$.
\end{itemize} 
\begin{theorem}\cite{S16}\label{NSCfor Slant}
	Let $\pi$ be a Riemannian map from a Riemannian manifold $(P, g_P)$
	to an almost Hermitian manifold $(Q, g_Q, J')$. Then $\pi$ is a slant Riemannian map if and only if there exists a constant $\lambda \in [-1, 0]$ such that
	$\alpha^2\pi_*(X) = -\lambda \pi_*(X)$
	for $X \in \Gamma((ker \pi_*)^\perp).$ If $\pi$ is a slant Riemannian map, then $\lambda = - cos^2\theta$.
\end{theorem}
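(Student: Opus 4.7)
The plan is to extract the slant angle from the orthogonal decomposition $J'\pi_*X = \alpha\pi_*X + \delta\pi_*X$ and reduce the whole statement to a linear-algebra fact about the endomorphism $\alpha^2$ of $\mathrm{range}\,\pi_*$. First I would compute the slant angle directly: since the angle between $J'\pi_*X$ and the subspace $\mathrm{range}\,\pi_*$ is the angle between $J'\pi_*X$ and its orthogonal projection, and that projection is precisely $\alpha\pi_*X$ by (\ref{DJF}), I get
\begin{equation*}
\cos\theta(X)=\frac{\lVert \alpha\pi_*X\rVert}{\lVert J'\pi_*X\rVert}=\frac{\lVert \alpha\pi_*X\rVert}{\lVert \pi_*X\rVert},
\end{equation*}
where the second equality uses (\ref{COHM}). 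So slantness is equivalent to $\lVert\alpha\pi_*X\rVert^2/\lVert\pi_*X\rVert^2$ being constant.

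Next I would establish the algebraic properties of $\alpha$. From (\ref{COHM}) and $J'^{2}=-I$, the endomorphism $J'$ of $TQ$ is skew-adjoint: $g_Q(J'Y,Z)=-g_Q(Y,J'Z)$. Restricting this to $Y,Z\in \mathrm{range}\,\pi_*$ and noting that the $(\mathrm{range}\,\pi_*)^{\perp}$-component $\delta$ drops out of the pairings, I get $g_Q(\alpha Y,Z)=-g_Q(Y,\alpha Z)$, so $\alpha$ is skew-adjoint on $\mathrm{range}\,\pi_*$. Consequently $\alpha^{2}$ is self-adjoint and negative semi-definite, and
\begin{equation*}
g_Q(\alpha^{2}\pi_*X,\pi_*X)=-g_Q(\alpha\pi_*X,\alpha\pi_*X)=-\lVert\alpha\pi_*X\rVert^{2}.
\end{equation*}
Combining with the first display,
\begin{equation*}
g_Q(\alpha^{2}\pi_*X,\pi_*X)=-\cos^{2}\theta(X)\,\lVert\pi_*X\rVert^{2}.
\end{equation*}

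For the forward direction, assume $\pi$ is slant with constant angle $\theta$. Then $g_Q\!\bigl((\alpha^{2}+\cos^{2}\theta\,\mathrm{id})\pi_*X,\pi_*X\bigr)=0$ for every $X\in\Gamma((\ker\pi_*)^{\perp})$. Since $\alpha^{2}+\cos^{2}\theta\,\mathrm{id}$ is a self-adjoint endomorphism of $\mathrm{range}\,\pi_*$ whose associated quadratic form vanishes identically, it must be zero; hence $\alpha^{2}\pi_*X=-\cos^{2}\theta\,\pi_*X$, giving the stated relation with $\lambda=-\cos^{2}\theta\in[-1,0]$.

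For the converse, if $\alpha^{2}\pi_*X=-\lambda\pi_*X$ for a constant $\lambda$, pairing with $\pi_*X$ and using the identity $g_Q(\alpha^{2}\pi_*X,\pi_*X)=-\lVert\alpha\pi_*X\rVert^{2}$ gives $\lVert\alpha\pi_*X\rVert^{2}=-\lambda\lVert\pi_*X\rVert^{2}$, so $\cos^{2}\theta(X)=-\lambda$ is independent of $X$ and $r\in P$; this is slantness, and comparing constants yields $\lambda=-\cos^{2}\theta$, which forces $\lambda\in[-1,0]$. The only subtle step is the polarization at the end of the forward direction: it is important that $\alpha^{2}$ is self-adjoint (not just that the quadratic form vanishes on a spanning set), so I would take care to derive the skew-adjointness of $\alpha$ carefully before invoking it.
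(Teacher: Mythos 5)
The paper does not prove this statement at all: it is imported verbatim (as Theorem~\ref{NSCfor Slant}) from \c{S}ahin's paper \cite{S16} and used as a black box, so there is no in-paper proof to compare against. Your argument is essentially the standard one (Chen's characterization of slant submanifolds transplanted to the map setting, which is how \cite{S16} proves it): identify $\alpha\pi_*X$ as the orthogonal projection of $J'\pi_*X$ onto $\mathrm{range}\,\pi_*$, deduce $\cos\theta(X)=\lVert\alpha\pi_*X\rVert/\lVert\pi_*X\rVert$ from \eqref{COHM}, show $\alpha$ is skew-adjoint on $\mathrm{range}\,\pi_*$ so that $g_Q(\alpha^2\pi_*X,\pi_*X)=-\lVert\alpha\pi_*X\rVert^2$, and polarize. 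All of these steps are sound, and you are right to insist on self-adjointness of $\alpha^2$ before concluding from the vanishing of the quadratic form; that is the only genuinely delicate point and you handle it.

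One discrepancy you should flag explicitly: what you actually prove is $\alpha^2\pi_*X=-\cos^2\theta\,\pi_*X$, i.e.\ $\alpha^2\pi_*X=\lambda\,\pi_*X$ with $\lambda=-\cos^2\theta\in[-1,0]$. That is the correct relation (it is what \cite{S16} states, and it is what this paper silently uses later, e.g.\ when $-\alpha^2\pi_*X$ is replaced by $\cos^2\theta\,\pi_*X$ in the proof of its Theorem~3.1), but it is \emph{not} literally what the statement above says: as transcribed, $\alpha^2\pi_*(X)=-\lambda\pi_*(X)$ with $\lambda=-\cos^2\theta$ would read $\alpha^2\pi_*X=+\cos^2\theta\,\pi_*X$, which is incompatible with the negative semi-definiteness of $\alpha^2$ that you yourself establish (it would force $\theta=\pi/2$). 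So the transcription contains a sign typo; your proof establishes the intended theorem, but as written your converse step ``$\lVert\alpha\pi_*X\rVert^2=-\lambda\lVert\pi_*X\rVert^2$'' only follows from the corrected hypothesis $\alpha^2\pi_*X=\lambda\pi_*X$, and you should say so rather than claim agreement with the displayed formula.
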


\section{Clairaut slant Riemannian maps to K\"ahler manifolds}

In this section, we introduce the notion of Clairaut slant Riemannian map from a Riemannian manifold to a K\"ahler manifold. We investigate some characteristics of this map. 
\begin{definition}
	A slant Riemannian map from a Riemannian  manifold $(P, g_P)$ to a K\"ahler manifold $(Q, g_Q)$ is called a Clairaut slant Riemannian map if it satisfies the definition \ref{CRMps}.
\end{definition}
\noindent Next, onward, we will consider the map $\pi$ in which $(range\pi_*)^\perp$ is totally geodesic.
\begin{theorem}
	Let $\pi$ be a slant Riemannian map from a Riemannian manifold $(P, g_P)$ to a K\"ahler manifold $(Q, g_Q, J')$. If $\beta$ is a geodesic  on $(P, g_P)$, then the curve $\eta = \pi o \beta$ is geodesic on $Q$ if and only if 
	\begin{equation}\label{GCCS1}
		\begin{split}
			&cos^2\theta \pi_*(\nabla^P_X X) + S_{\delta(\alpha \pi_{*}X)}\pi_*X - \pi_*(\nabla^P_X*\pi_{*}B(\delta \pi_{*}X))+ 	S_{C(\delta \pi_*X)} \pi_*X\\& + cos^2\theta\nabla^Q_U \pi_*X - \nabla^\pi_U B(\delta \pi_{*}X) - B(\nabla \pi_*)(X, *\pi_{*}BU) - \alpha(\nabla^P_X *\pi_{*}BU)\\& + \alpha(S_{CU}\pi_*X) - B(\nabla^{\pi\perp}_X CU) -\alpha(\nabla^\pi_U BU) - B(\nabla^{\pi\perp}_U CU) = 0
		\end{split}
	\end{equation} 
	and 
	\begin{equation}\label{GCCS2}
		\begin{split}
			&cos^2\theta(\nabla \pi_*)(X, X) - \nabla^{\pi\perp}_X \delta(\alpha \pi_* X) - (\nabla \pi_*)(X, *\pi_{*}B(\delta \pi_*X))\\& - \nabla^{\pi\perp}_X C(\delta \pi_*X) - \nabla^{\pi\perp}_U \delta(\pi_{*}X) - \nabla^{\pi\perp}_U C(\delta \pi_*X) - C(\nabla \pi_*)(X, *\pi_{*}BU)\\& + \delta(\nabla^P_X *\pi_{*}BU) - \delta\nabla^Q_U BU - C\nabla^{\pi\perp}_U CU - C\nabla^{\pi\perp}_X CU + \delta(S_{CU}\pi_*X) = 0, 
		\end{split}
	\end{equation}
	where $\pi_*X, U$ are the vertical and horizontal part of $\dot{\eta}$ repectively,and $\nabla^Q$ is the Levi-Civita connection on $Q$ and $\nabla^{\pi\perp}$ is a linear connection on $(range\pi_*)^\perp$.
	
\end{theorem}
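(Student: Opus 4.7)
The plan is to analyze the geodesic equation $\nabla^{Q_\pi}_{\dot{\eta}}\dot{\eta}=0$ on the target by decomposing the velocity and exploiting the K\"ahler identity $\nabla^Q J'=0$. Writing $\dot{\eta}=\pi_*X+U$ with $\pi_*X\in\Gamma(range\pi_*)$ and $U\in\Gamma((range\pi_*)^\perp)$, the curve $\eta$ is geodesic on $Q$ if and only if both the $range\pi_*$-component and the $(range\pi_*)^\perp$-component of $\nabla^Q_{\dot{\eta}}\dot{\eta}$ vanish. The two displayed equations are precisely these two projections, so the proof will consist of computing each projection explicitly.

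To produce the many $\alpha,\delta,B,C$ terms appearing in \eqref{GCCS1} and \eqref{GCCS2}, I would apply $J'$ twice: by \eqref{COKM},
\begin{equation*}
\nabla^Q_{\dot{\eta}}\dot{\eta}=-J'^{2}\nabla^Q_{\dot{\eta}}\dot{\eta}=-\nabla^Q_{\dot{\eta}}\bigl(J'J'\dot{\eta}\bigr),
\end{equation*}
and then expand $J'\dot{\eta}=\alpha\pi_*X+\delta\pi_*X+BU+CU$ via \eqref{DJF}, \eqref{DJV}. Applying the connection term-by-term and using \eqref{COKM} again on each summand yields pieces of the form $\nabla^Q_{\pi_*X}(\cdot)$ and $\nabla^Q_U(\cdot)$ for each of the four components. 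On each such piece I would apply $J'$ one more time and decompose again; this is where all cross-terms like $\delta(\alpha\pi_*X)$, $B(\delta\pi_*X)$, $C(\delta\pi_*X)$ and $\alpha BU,\delta BU,BCU,CCU$ arise. Whenever a double $\alpha$ appears, I would substitute $\alpha^{2}\pi_*X=-\cos^{2}\theta\,\pi_*X$ using Theorem \ref{NSCfor Slant}, which is exactly what manufactures the $\cos^{2}\theta$ coefficients in front of $\pi_*(\nabla^P_X X)$, $\nabla^Q_U\pi_*X$ and $(\nabla\pi_*)(X,X)$.

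The splitting into tangential/normal parts would use the two structural identities already in the paper: equation \eqref{S_V} to separate $\nabla^Q_{\pi_*X}V=-S_V\pi_*X+\nabla^{\pi\perp}_XV$ for every horizontal section $V$ that appears (producing the $S$-tensor terms in \eqref{GCCS1} and the $\nabla^{\pi\perp}$ terms in \eqref{GCCS2}), and the second fundamental form identity \eqref{SFF} together with the Riemannian-map property $(\nabla\pi_*)(X,Y)\in\Gamma(range\pi_*)^\perp$ to handle derivatives along $\pi_*X$ of sections of $range\pi_*$. For a horizontal section such as $BU$ (respectively $B(\delta\pi_*X)$), I would lift it to $P$ via the adjoint as $BU=\pi_*(*\pi_*BU)$ and use \eqref{SFF}, which is what produces the terms $\pi_*(\nabla^P_X*\pi_*BU)$ and $(\nabla\pi_*)(X,*\pi_*BU)$ in the two equations.

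Finally, I would collect every resulting term and project onto $range\pi_*$ and $(range\pi_*)^\perp$; the tangential projection is \eqref{GCCS1} and the normal projection is \eqref{GCCS2}. The converse is immediate: adding \eqref{GCCS1} and \eqref{GCCS2} reassembles $\nabla^Q_{\dot{\eta}}\dot{\eta}=0$. The main obstacle is purely computational bookkeeping: two successive applications of $J'$ combined with the connection generate a large web of cross-terms involving $\alpha,\delta,B,C$ and the $S$-tensor, and one must carefully track which terms are tangential to $range\pi_*$ and which lie in $(range\pi_*)^\perp$, invoking the slant identity $\alpha^{2}=-\cos^{2}\theta\,I$ at exactly the right places so that the remaining expressions match \eqref{GCCS1} and \eqref{GCCS2} on the nose.
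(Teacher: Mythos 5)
Your plan follows essentially the same route as the paper's proof: apply $J'$ twice via the K\"ahler identity \eqref{COKM}, expand $J'\dot{\eta}$ with \eqref{DJF} and \eqref{DJV}, substitute $\alpha^2\pi_*X=-\cos^2\theta\,\pi_*X$ from Theorem \ref{NSCfor Slant}, split the $\nabla^Q_{\pi_*X}$-derivatives with \eqref{S_V} and \eqref{SFF} (lifting $BU$ and $B(\delta\pi_*X)$ through the adjoint map), and finally project onto $range\pi_*$ and $(range\pi_*)^\perp$. The one ingredient you do not address is the treatment of the $\nabla^Q_U(\cdot)$ terms: the paper disposes of these using the standing assumption, stated just before the theorem, that $(range\pi_*)^\perp$ is totally geodesic (so that $\nabla^Q_U V=\nabla^{\pi\perp}_U V$ for $V\in\Gamma(range\pi_*)^\perp$) together with metric compatibility (so that $\nabla^Q_U$ of a section of $range\pi_*$ stays in $range\pi_*$); without invoking this, those terms would not split cleanly and the projections would not reduce to \eqref{GCCS1} and \eqref{GCCS2} exactly.
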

\begin{proof}
	Let $\beta$ is a geodesic on $P$ and let $\eta = \pi o \beta$ is a regular curve on $Q$. Suppose $\pi_*X$, $U$ are the vertical and horizontal components respectively, of $\dot{\eta}(t)$.  Since $Q$ is K\"ahler manifold, we can write 
	$$\nabla^Q_{\dot{\eta}}{\dot{\eta}} = -J'\nabla^Q_{\dot{\eta}}{J'\dot {\eta}}$$
	which implies 
	\begin{eqnarray}
		\nabla^Q_{\dot{\eta}}{\dot{\eta}} &=& -J'\nabla^Q_{\pi_*X + U }{J'(\pi_*X +U)},
		\nonumber\\&=& -J'\nabla^Q_{\pi_*X}J'\pi_*X - J'\nabla^Q_{U}J'\pi_*X - J'\nabla^Q_{\pi_*X}J'U - J'\nabla^Q_{U}J'U.\nonumber
	\end{eqnarray}
	Making use of \eqref{COKM}, \eqref{DJF} and \eqref{DJV} in above equation, we get
	
	\begin{equation}\label{J1}
		\begin{split}
			\nabla^Q_{\dot{\eta}}{\dot{\eta}}& = -\nabla^Q_{\pi_*X}\alpha^2\pi_*X - \nabla^Q_{\pi_*X}\delta(\alpha \pi_*X) - \nabla^Q_{\pi_*X}B(\delta \pi_*X) - \nabla^Q_{\pi_*X}C(\delta \pi_*X) \\& - \nabla^Q_U \alpha^2\pi_*X - \nabla^Q_U \delta(\alpha \pi_*X) - \nabla^Q_U B(\delta \pi_*X) - \nabla^Q_U C(\delta  \pi_*X) \\&- J'\nabla^Q_{\pi_*X}BU - J'\nabla^Q_{\pi_*X}CU - J'\nabla^Q_{U}BU - J'\nabla^Q_{U}CU.	
		\end{split}
	\end{equation}
	From \eqref{Sv} and \eqref{SFF}, we get
	$$\nabla^Q_{\pi_*X}\pi_*X = (\nabla \pi_*)(X, X) +  \pi_*(\nabla^P_X X),$$ 
	$$\nabla^Q_{\pi_*X}\delta(\alpha\pi_*X) = -S_{\delta(\alpha \pi_{*}X)}\pi_*X + \nabla^{\pi\perp}_X \delta(\alpha \pi_*X),$$
	$$\nabla^Q_{\pi_{*}X}B(\delta \pi_*X) = (\nabla \pi_*)(X, *_{\pi_*}B(\delta \pi_*X)) + \pi_*(\nabla_X *_{\pi_*}B(\delta \pi_*X)),$$
	$$\nabla^Q_{\pi_*X}C(\delta \pi_*X) = -S_{C(\delta \pi_{*}X)}\pi_*X + \nabla^{\pi\perp}_X C(\delta \pi_*X),$$
	$$\nabla^Q_{\pi_*X}CU = -S_{CU}\pi_{*}X + \nabla^{\pi\perp}_X CU,$$
	$$\nabla^Q_{\pi_*X}BU = (\nabla \pi_*)(X, *_{\pi_*}BU) + \pi_*(\nabla^P_X *_{\pi_{*}}BU).$$
	Since $(range\pi_*)^\perp$ is totally geodesic,
	
	$$\nabla^Q_U\delta(\alpha \pi_*X) = \nabla^{\pi\perp}_U\delta(\alpha \pi_*X),$$
	$$\nabla^Q_U C(\delta \pi_*X) = \nabla^{Q\perp}_UC(\delta \pi_*X),$$
	$$\nabla^Q_U CU = \nabla^{\pi\perp}_U CU.$$
	By metric compatibility, we have\\
	$$\nabla^Q_UB(\delta \pi_*X)\in\Gamma(range \pi_*),$$
	$$\nabla^Q_U BU\in\Gamma(range\pi_*),$$
	$$\nabla^Q_U \pi_*X \in\Gamma(range\pi_*).$$
	Making use of above terms in \eqref{J1}, we obtain
	
	\begin{equation}
		\begin{split}
			\nabla^Q_{\dot{\eta}}{\dot{\eta}}& = cos^2\theta(\nabla \pi_*)(X, X) + cos^2\theta \pi_*(\nabla^P_X X) + S_{\delta(\alpha \pi_* X)}\pi_*X \\&- \nabla^{\pi\perp}_X \delta(\alpha \pi_*X) + cos^2\theta\nabla^Q_U \pi_*X -\nabla^{\pi\perp}_U \delta(\alpha \pi_* X) \\&- (\nabla \pi_*)(X, *_{\pi_*}B(\delta \pi_*X)) - \pi_*(\nabla^P_X *_{\pi_*}B(\delta \pi_* X)) + S_{C(\delta\pi_*X)} \pi_*X \\&- \nabla^{\pi\perp}_X C(\delta \pi_*X) - \nabla^Q_U B(\delta \pi_*X) - \nabla^{\pi\perp}_U C(\delta \pi_*X) - B(\nabla \pi_*)(X, *_{\pi_*}BU) \\&- C(\nabla \pi_*)(X, *_{\pi_*}BU) -\alpha \pi_*(\nabla^P_X*_{\pi_*}BU)-\delta \pi_*(\nabla^P_X*_{\pi_*}BU) \\&+ \alpha(S_{CU} \pi_*X) + \delta(S_{CU} \pi_*X)- B(\nabla^{\pi\perp}_X CU) - C(\nabla^{\pi\perp}_X CU) \\&-\alpha(\nabla^Q_UBU) - \delta(\nabla^Q_U BU)- B(\nabla^{\pi\perp}_U CU) - C(\nabla^{\pi\perp}_U CU).
		\end{split}
	\end{equation}
	Since $\eta$ is geodesic on $Q$ 
	if and only if $\nabla^Q_{\dot{\eta}} \dot{\eta} = 0,$\\ this implies
	\begin{equation}
		\begin{split}
			 &cos^2\theta(\nabla \pi_*)(X, X) + cos^2\theta \pi_*(\nabla^P_X X) + S_{\delta(\alpha \pi_* X)}\pi_*X - \nabla^{\pi\perp}_X \delta(\alpha \pi_*X) \\&+ cos^2\theta\nabla^Q_U \pi_*X -\nabla^{\pi\perp}_U \delta(\alpha \pi_* X) - (\nabla \pi_*)(X, *_{\pi_*}B(\delta \pi_*X)) \\&- \pi_*(\nabla^P_X *_{\pi_*}B(\delta \pi_* X)) + S_{C(\delta\pi_*X)} \pi_*X - \nabla^{\pi\perp}_X C(\delta \pi_*X) \\&- \nabla^Q_U B(\delta \pi_*X) - \nabla^{\pi\perp}_U C(\delta \pi_*X) - B(\nabla \pi_*)(X, *_{\pi_*}BU) \\&- C(\nabla \pi_*)(X, *_{\pi_*}BU)-\alpha \pi_*(\nabla^P_X*_{\pi_*}BU)-\delta \pi_*(\nabla^P_X*_{\pi_*}BU) \\& + \alpha(S_{CU} \pi_*X) + \delta(S_{CU} \pi_*X)- B(\nabla^{\pi\perp}_X CU) - C(\nabla^{\pi\perp}_X CU)\\& -\alpha(\nabla^Q_UBU) - \delta(\nabla^Q_U BU)- B(\nabla^{\pi\perp}_U CU) - C(\nabla^{\pi\perp}_U CU) =0.
		\end{split}
	\end{equation}
	Taking vertical and horizontal part, we get \eqref{GCCS1} and \eqref{GCCS2}.
\end{proof}
\begin{theorem}
	Let $\pi$ be a slant Riemannian map with connected fibers from a Riemannian manifold $(P, g_P)$ to a K\"ahler manifold $(Q, g_Q, J')$. Then $\pi$ is a Clairaut slant Riemannian map with $s=e^f$ if and only if 
	\begin{equation}
		\begin{split}
			&B(\nabla \pi_*)(X, *_{\pi_*}BU) + \alpha (\nabla^P_X *_{\pi_*}BU) - \alpha(S_{CU}\pi_*X) + B(\nabla^{\pi\perp}_X CU)\\& + \alpha (\nabla^Q_U BU) + B(\nabla^{\pi\perp}_U CU) =  \pi_*{Y} U(f),
		\end{split}
	\end{equation}
	where $\pi_*X, U$ are the vertical and horizontal part of tangent vector field $\dot{\eta}$ on $Q$.
\end{theorem}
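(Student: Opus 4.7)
The plan is to exploit the K\"ahler identity $\nabla^Q J'=0$ to compute $\nabla^Q_{\pi_*X}U$ and $\nabla^Q_U U$ explicitly, extract their $\Gamma(range\pi_*)$-components, and then read off the equivalence by invoking Theorem \ref{NSC}(i), which characterizes Clairaut with $s=e^f$ by the pointwise condition $S_V\pi_*X=-V(f)\pi_*X$ for horizontal $V$. The virtue of this route is that the K\"ahler computation naturally produces exactly the six summands appearing on the left-hand side of the target identity, grouped by whether $J'$ acts via $\alpha+\delta$ or via $B+C$.

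First, I write $U=-J'(J'U)=-J'(BU+CU)$ using \eqref{DJV} and $J'^2=-I$. Applying $\nabla^Q_{\pi_*X}$ and using the K\"ahler condition \eqref{COKM} gives
\begin{equation*}
\nabla^Q_{\pi_*X}U=-J'\bigl[\nabla^Q_{\pi_*X}BU+\nabla^Q_{\pi_*X}CU\bigr].
\end{equation*}
Since $BU\in\Gamma(range\pi_*)$, writing $BU=\pi_*(*_{\pi_*}BU)$ and invoking \eqref{SFF} yields $\nabla^Q_{\pi_*X}BU=(\nabla\pi_*)(X,*_{\pi_*}BU)+\pi_*(\nabla^P_X *_{\pi_*}BU)$, while $CU\in\Gamma(range\pi_*)^\perp$ together with \eqref{S_V} gives $\nabla^Q_{\pi_*X}CU=-S_{CU}\pi_*X+\nabla^{\pi\perp}_X CU$. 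Now each summand is split under $J'$ via \eqref{DJF} (on $\Gamma(range\pi_*)$, $J'=\alpha+\delta$) and \eqref{DJV} (on $\Gamma(range\pi_*)^\perp$, $J'=B+C$). Extracting the $\Gamma(range\pi_*)$-component of both sides and comparing with the $\Gamma(range\pi_*)$-component of \eqref{S_V} applied to $V=U$, which is $-S_U\pi_*X$, I obtain
\begin{equation}\label{eqAplan}
S_U\pi_*X=B(\nabla\pi_*)(X,*_{\pi_*}BU)+\alpha\pi_*(\nabla^P_X *_{\pi_*}BU)-\alpha(S_{CU}\pi_*X)+B(\nabla^{\pi\perp}_X CU).
\end{equation}

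Next, I repeat the analogous computation for $\nabla^Q_U U=-J'(\nabla^Q_U BU+\nabla^Q_U CU)$. The assumption that $(range\pi_*)^\perp$ is totally geodesic forces $\nabla^Q_U CU=\nabla^{\pi\perp}_U CU\in\Gamma(range\pi_*)^\perp$, and metric compatibility with $g_Q(BU,V)=0$ for horizontal $V$ forces $\nabla^Q_U BU\in\Gamma(range\pi_*)$. Decomposing under $J'$ once more and using that $\nabla^Q_U U\in\Gamma(range\pi_*)^\perp$ (so its $\Gamma(range\pi_*)$-component vanishes) gives
\begin{equation}\label{eqBplan}
\alpha(\nabla^Q_U BU)+B(\nabla^{\pi\perp}_U CU)=0.
\end{equation}
Adding \eqref{eqAplan} and \eqref{eqBplan} reproduces exactly the left-hand side of the stated identity. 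Since the fibers are connected, Theorem \ref{NSC}(i) applies and tells us that $\pi$ is Clairaut with $s=e^f$ if and only if $S_U\pi_*X=-U(f)\pi_*X$; substituting this into the sum closes the equivalence in both directions.

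The main obstacle is purely organizational: one must keep careful track of which subbundle each intermediate expression belongs to — in particular, that $(\nabla\pi_*)(X,*_{\pi_*}BU)$ lies in $\Gamma(range\pi_*)^\perp$ so $J'$ acts on it via $B+C$, whereas $\pi_*(\nabla^P_X *_{\pi_*}BU)$ and $S_{CU}\pi_*X$ lie in $\Gamma(range\pi_*)$ so $J'$ acts via $\alpha+\delta$. Once this bookkeeping is in place, collecting terms and matching against \eqref{S_V} is routine, and no deeper difficulty arises.
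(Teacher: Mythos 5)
Your argument is correct, but it follows a genuinely different route from the paper's. The paper works dynamically: it parametrizes a geodesic $\eta$ on $Q$ with constant speed $k$, differentiates $g_Q(\pi_*X,\pi_*X)=k\sin^2\omega(t)$ along $\eta$, uses the vertical geodesic equation \eqref{GCCS1} from the preceding theorem to convert $g_Q(\nabla_{\dot{\eta}}\pi_*X,\pi_*X)$ into the inner product of the six displayed terms with $\pi_*X$, and then couples this with the defining ODE $\tfrac{d}{dt}\bigl(e^{f\circ\eta}\sin\omega(t)\bigr)=0$. You instead establish the pointwise vector identity that the six terms sum exactly to $S_U\pi_*X$ --- the first four arising as the negative of the $\Gamma(range\pi_*)$-component of $\nabla^Q_{\pi_*X}U=-J'\nabla^Q_{\pi_*X}(BU+CU)$, and the last two vanishing because $(range\pi_*)^\perp$ is totally geodesic --- and then quote Theorem \ref{NSC}(i). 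Your route is shorter, does not depend on the previous theorem, and yields the full vector equation rather than only its inner product against $\pi_*X$, which is all the paper's computation literally establishes before declaring the proof complete; the trade-off is that you outsource the Clairaut characterization to Theorem \ref{NSC}(i), whereas the paper in effect re-derives that ODE characterization inline. One point to flag: your identification gives the left-hand side equal to $S_U\pi_*X=-U(f)\pi_*X$, which differs in sign (and in the spurious $Y$) from the displayed right-hand side $\pi_*Y\,U(f)$; the paper's own final line, once the evident typo $\pi_*Y\mapsto\pi_*X$ is corrected and one uses that $\nabla^Qf$ is horizontal for a Clairaut map, also reduces to $-U(f)\pi_*X$, so the discrepancy lies in the statement as printed, not in your derivation.
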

\begin{proof}
	Let $\eta$ be a geodesic on $Q$ with constant speed $k$ i.e., k = $||\dot{\eta}||^2$.\\
	Let vertical and horizontal part of $\dot{\eta}$ are $\pi_*X, U,$ respectively. Then, we get
	
	\begin{equation}\label{||(F_*x)||^2}
		g_Q(\pi_*X, \pi_*X) = ksin^2\omega(t)
	\end{equation}
	and
	\begin{equation}
		g_Q(U, U) = k cos^2\omega(t),
	\end{equation}
	where $\omega(t)$ is the angle between $\dot{\eta}(t)$ and the horizontal space at $\eta(t).$\\
	Differentiating \eqref{||(F_*x)||^2}, we get
	$$\dfrac{d}{dt}g_Q(\pi_*X, \pi_*X) = 2ksin\omega(t)cos\omega(t)\dfrac{d\omega}{dt}$$
	which implies\\
	$$g_Q(\nabla_{\dot{\eta}}\pi_*X, \pi_*X) =  ksin\omega(t)cos\omega(t)\dfrac{d\omega}{dt}.$$
	Since
	\begin{equation}
		g_Q(\nabla_{\dot{\eta}}\pi_*X, \pi_*X) = g_Q(\nabla^Q_{\pi_*X}\pi_*X + \nabla^Q_U \pi_*X, \pi_*X).
	\end{equation}
	
	Using \eqref{COHM} and \eqref{DJF} in above equation, we have
	\begin{equation}\label{G10}
		\begin{split}
			g_Q(\nabla_{\dot{\eta}}\pi_*(X), \pi_*X)=& -g_Q(\nabla^Q_{\pi_*X}\alpha^2\pi_*X + \nabla^Q_{\pi_*X}\delta(\alpha \pi_*X) \\&+ \nabla^Q_{\pi_*X}B(\delta \pi_*X)+ \nabla^Q_{\pi_*X}C(\delta \pi_*X) + \nabla^Q_U \alpha^2\pi_*X \\&+ \nabla^Q_U \delta(\alpha \pi_*X) + \nabla^Q_U B(\delta \pi_*X) + \nabla^Q_U C(\delta \pi_*X), \pi_*X).
		\end{split}
	\end{equation}
	Making use of \eqref{Sv}, \eqref{SFF} and Theorem \ref{NSCfor Slant} in \eqref{G10}, we obtain
	\begin{equation}
		\begin{split}
			g_Q(\nabla_{\dot{\eta}}\pi_*(X), \pi_*X)= & g_Q(cos^2\theta \pi_*(\nabla^P_X X) + S_{\delta(\alpha \pi_*X)}\pi_*X \\&- \pi_*(\nabla^P_X *_{\pi_*}B(\delta \pi_*X)) + S_{C(\delta \pi_*X)}\pi_*X \\&+ cos^2\theta\nabla^Q_U \pi_*X - \nabla^Q_U B(\delta \pi_*X), \pi_*X) \\&= ksin\omega(t)cos\omega(t)\dfrac{d\omega}{dt}.
		\end{split}
	\end{equation}
	From \eqref{GCCS1}, we get
	
	\begin{equation}\label{theorem 2nd}
		\begin{split}
			&  g_Q(B(\nabla \pi_*)(X, *_{\pi_*}BU) + \alpha (\nabla^P_X *_{\pi_*}BU) - \alpha(S_{CU}\pi_*X) + B(\nabla^{\pi\perp}_X CU)\\& + \alpha (\nabla^Q_U BU) + B(\nabla^{\pi\perp}_U CU), \pi_*X) = ksin\omega(t)cos\omega(t)\dfrac{d\omega}{dt}.
		\end{split}
	\end{equation}

	Since $\pi$ is a Clairaut Riemannian map with $s=e^f$ if and only if $\dfrac{d}{dt}(e^{fo\eta}sin\omega(t)) = 0.$\\
	Therefore , 
	\begin{equation}
		e^{fo\eta}\dfrac{d}{dt}(fo\eta)sin\omega(t) + cos\omega(t)e^{fo\eta}\dfrac{d\omega}{dt} = 0,
	\end{equation}
	multiplying above equation with $ksin\omega(t)$, we get\\
	$$e^{fo\eta}(ksin^2\omega(t)\dfrac{d}{dt}fo\eta+ ksin\omega(t)cos\omega(t)\frac{d\omega}{dt}) = 0.$$
	Since $e^{fo\eta}$ is a positive function,
	$$g_Q(\pi_*{X}, \pi_*{Y})g_Q(gradf, \dot{\eta}) = -ksin\omega(t)cos\omega(t)\dfrac{d\omega}{dt}.$$
	From \ref{theorem 2nd}, we get
	\begin{equation}
		\begin{split}
			&g_Q(B(\nabla \pi_*)(X, *_{\pi_*}BU) + \alpha (\nabla^P_X *_{\pi_*}BU) - \alpha(S_{CU}\pi_*X) + B(\nabla^{\pi\perp}_X CU)\\& + \alpha(\nabla^Q_U BU) + B(\nabla^{\pi\perp}_U CU), \pi_*X) = - g_Q(\pi_*{X}, \pi_*{Y})g_Q(gradf, \dot{\eta}). 
		\end{split}
	\end{equation}
	This completes the proof.
	
\end{proof}
\begin{theorem}\label{condition on f}
	Let $\pi$ be a Clairaut slant Riemannian map from a Riemannian manifold $(P, g_P)$ to a K\"ahler manifold  $(Q, g_Q, J')$ with $s= e^f$ such that $\delta$ is parallel. Then $f$ is constant on $\delta(range\pi_*)$.
\end{theorem}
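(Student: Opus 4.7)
My plan is to apply the Clairaut characterisation from Theorem \ref{NSC}(i) to the horizontal vector $V=\delta\pi_*Y$ (with $Y\in\Gamma((ker\pi_*)^\perp)$ arbitrary) and to show that the resulting scalar $(\delta\pi_*Y)(f)$ is forced to vanish. Pairing $S_{\delta\pi_*Y}\pi_*X=-(\delta\pi_*Y)(f)\pi_*X$ with $\pi_*Z$ and invoking \eqref{Sv} rephrases the Clairaut condition as the master equation
\begin{equation*}
g_Q\bigl(\delta\pi_*Y,(\nabla\pi_*)(X,Z)\bigr)=-(\delta\pi_*Y)(f)\,g_P(X,Z). \tag{E}
\end{equation*}
I will turn this identity against itself using an auxiliary relation produced by the hypothesis that $\delta$ is parallel.

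To extract that relation, I would expand the K\"ahler identity $\nabla^Q_{\pi_*X}J'\pi_*Z=J'\nabla^Q_{\pi_*X}\pi_*Z$ via \eqref{DJF}, \eqref{DJV}, \eqref{S_V} and \eqref{SFF}, and retain only its $(range\pi_*)^\perp$-component; the outcome is
\[
\nabla^{\pi\perp}_X\delta\pi_*Z+(\nabla\pi_*)(X,*\pi_*\alpha\pi_*Z)=C(\nabla\pi_*)(X,Z)+\delta\pi_*(\nabla^P_XZ).
\]
The parallelism of $\delta$ gives $\nabla^{\pi\perp}_X\delta\pi_*Z=\delta\pi_*(\nabla^P_XZ)$, so the equation collapses to
\begin{equation*}
(\nabla\pi_*)(X,*\pi_*\alpha\pi_*Z)=C(\nabla\pi_*)(X,Z). \tag{$\ast$}
\end{equation*}
Alongside $(\ast)$ I will record the pointwise consequences of $J'^2=-I$, namely $\alpha^2=-\cos^2\theta\,I$ together with $C\delta=-\delta\alpha$ on $range\pi_*$, and the skew-adjointness of $\alpha$ on $range\pi_*$ and of $C$ on $(range\pi_*)^\perp$, both of which come from \eqref{COHM}.

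The core manoeuvre is to substitute $X\mapsto *\pi_*\alpha\pi_*X$ in (E). Symmetry of $(\nabla\pi_*)$ together with $(\ast)$ turns the left-hand side into $g_Q(\delta\pi_*Y,C(\nabla\pi_*)(X,Z))$, and skew-adjointness of $C$ combined with $C\delta=-\delta\alpha$ rewrite this further as $g_Q(\delta\alpha\pi_*Y,(\nabla\pi_*)(X,Z))$. The very same quantity is obtained directly from (E) with $Y\mapsto *\pi_*\alpha\pi_*Y$, giving $-(\delta\alpha\pi_*Y)(f)\,g_P(X,Z)$. Equating the two and then specialising to $Z=*\pi_*\alpha\pi_*X$, the skew-adjointness of $\alpha$ annihilates $g_P(X,Z)$, while the remaining coefficient becomes $g_Q(\alpha\pi_*X,\alpha\pi_*X)=\cos^2\theta\,g_P(X,X)$ by Theorem \ref{NSCfor Slant}. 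Thus $\cos^2\theta\,(\delta\pi_*Y)(f)\,g_P(X,X)=0$, and for a proper slant angle any nonzero $X$ forces $(\delta\pi_*Y)(f)=0$ for every $Y$, which is exactly the claim that $f$ is constant on $\delta(range\pi_*)$.

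The main obstacle will be the clean derivation of $(\ast)$: the normal component of the K\"ahler identity produces a cluster of terms involving $\alpha$, $\delta$, $B$, $C$, $(\nabla\pi_*)$ and $S$, and only the $\delta$-parallel hypothesis isolates $(\ast)$ by cancelling the two $\nabla$-terms exactly. The second delicate point is bookkeeping during the substitution $X\mapsto *\pi_*\alpha\pi_*X$: a sign slip in the skew-adjointness of $C$, or in $C\delta=-\delta\alpha$, would break the telescoping that delivers the final scalar identity.
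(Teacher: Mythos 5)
Your argument is correct as far as it goes, but it takes a genuinely different and considerably longer route than the paper. The paper's proof starts from the same Clairaut identity $(\nabla\pi_*)(X,Y)=-g_P(X,Y)\nabla^Q f$, pairs it with $\delta\pi_*Z$, and then simply uses metric compatibility to move the derivative onto $\delta\pi_*Z$: since $\delta$ is parallel, $\nabla^{Q_\pi}_X\delta\pi_*Z=\delta\nabla^{Q_\pi}_X\pi_*Z$ lies in $(range\pi_*)^\perp$ and is therefore orthogonal to $\pi_*Y$, so the left-hand side vanishes identically and $g_P(X,Y)\,g_Q(\nabla^Qf,\delta\pi_*Z)=0$ follows in two lines, for every slant angle. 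You instead extract the normal component of the K\"ahler identity to get $(\ast)$ and then play your equation (E) against itself with $\alpha$-twisted arguments; the algebra checks out (the normal component computation, $C\delta=-\delta\alpha$, and the skew-adjointness claims are all right), but the final cancellation hinges on $\cos^2\theta\,g_P(X,X)\neq 0$, so your proof silently excludes $\theta=\pi/2$, which the paper's definition of slant map does admit (it explicitly covers the anti-invariant case); the paper's metric-compatibility argument has no such restriction. It is also worth noting that your identity $(\ast)$, combined with the Clairaut condition $(\nabla\pi_*)(A,B)=-g_P(A,B)\nabla^Qf$, actually yields $g_Q(\pi_*X,\alpha\pi_*Z)\nabla^Qf=g_P(X,Z)C\nabla^Qf$ for all $X,Z$, which for $\theta\neq\pi/2$ forces the entire normal part of $\nabla^Qf$ to vanish --- a conclusion strictly stronger than the theorem you set out to prove; this is not an error, but it shows your machinery is heavier than the statement requires, whereas the paper's direct argument delivers exactly the stated conclusion with minimal hypotheses on $\theta$.
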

\begin{proof}
Since $\pi$ is Clairaut Riemannian map with $s=e^f,$ from \eqref{SFF}, Lemma \ref{Lemma} and Theorem \ref{NSC}, we get 
\begin{equation}\label{CRMC}
	\nabla^{Q_\pi}_X \pi_*Y - \pi_*(\nabla^P_X Y) = -g_P(X, Y)\nabla^Qf ~ for X, Y \in \Gamma(ker\pi_*)^\perp.
\end{equation}

Taking inner product of \eqref{CRMC} with $\delta\pi_*Z$, we get
\begin{equation*}
	g_Q(\nabla^{Q_\pi}_X \pi_*Y - \pi_*(\nabla_X Y), \delta\pi_*Z) = -g_P(X, Y)g_Q(\nabla^Q f, \delta \pi_*Z).		
\end{equation*}
Thus,
\begin{equation}\label{CRMI}
	g_Q(\nabla^{Q_\pi}_X \pi_*Y , \delta \pi_*Z) = -g_P(X, Y)g_Q(\nabla^Q f, \delta \pi_*Z).		
\end{equation}
Since $\nabla^Q$ is the Levi-Civita connection of $Q$ and $\nabla^{Q_\pi}$ is the pullback connection of $\nabla^Q,$ therefore  $\nabla^{Q_\pi}$ is also Levi-Civita connection on $Q$. Then using metric compatibility condition, we get
\begin{equation}
	-g_Q(\nabla^{Q_\pi}_X \delta \pi_*Z, \pi_*Y) = -g_P(X, Y)g_Q(\nabla^Q f, \delta \pi_*Z). 		
\end{equation}
Since $\delta$ is parallel,
\begin{equation}
	g_Q(\delta \nabla^{Q_\pi}_X \pi_*Z, \pi_*Y) = g_P(X, Y)g_Q(\nabla^Q f, \delta\pi_*Z).		
\end{equation}
It gives $$g_P(X, Y)g_Q(\nabla^Q f, \delta \pi_*Z) = 0,$$
which implies that $\delta \pi_*Z(f) = 0.$ 
This completes the proof.

\end{proof}

\begin{theorem}
	\label{THR3.4}
Let $\pi : (P, g_P) \rightarrow (Q, g_Q, J')$ be a Clairaut slant
Riemannian map with $s = e^f$ from a Riemannian manifold $P$ to a K\"ahler manifold $Q$. Then $\pi$ is totally geodesic if and only if the following conditions are satisfied:
\begin{itemize}
	\item [(i)] $ker\pi_*$ is totally geodesic,
	\item [(ii)] $(ker\pi_*)^\perp$ is totally geodesic,
	\item [(iii)] $cos^2\theta \pi_*(\nabla^P_X Y) - \nabla^{\pi\perp}_X \delta(\alpha \pi_*Y)- B\nabla^{\pi\perp}_X \delta \pi_*Y - C \nabla^{\pi\perp}_X \delta \pi_*Y - \pi_*(\nabla^P_X Y)  = 0.$
	
\end{itemize}
\end{theorem}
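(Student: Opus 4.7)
The objective is to show that the vanishing of the full second fundamental form of $\pi$, i.e., $(\nabla\pi_*)(X, Y) = 0$ for all $X, Y \in \Gamma(TP)$, is equivalent to the conjunction of (i), (ii) and (iii). My strategy is to split $TP = (ker\pi_*)^\perp \oplus ker\pi_*$ and, using bilinearity and symmetry of $\nabla\pi_*$, reduce the vanishing condition to three independent blocks---vertical--vertical, horizontal--vertical, and horizontal--horizontal---each matching one of the three listed conditions.

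For $V, W \in \Gamma(ker\pi_*)$, since $\pi_*V = \pi_*W = 0$, the definition \eqref{SFF} collapses to $(\nabla\pi_*)(V, W) = -\pi_*(\nabla^P_V W)$, which vanishes for all such $V, W$ precisely when $ker\pi_*$ is totally geodesic in $P$; this is (i). For $X \in \Gamma((ker\pi_*)^\perp)$ and $V \in \Gamma(ker\pi_*)$ the same reasoning gives $(\nabla\pi_*)(X, V) = -\pi_*(\nabla^P_X V)$, and metric compatibility together with $g_P(Y', V) = 0$ shows that its vanishing for all such pairs amounts to the horizontal component of $\nabla^P_X Y'$ being zero for every horizontal $Y'$, i.e., to (ii). Neither of these two cases uses the K\"ahler structure or the Clairaut hypothesis.

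The horizontal--horizontal block carries the actual content. For $X, Y \in \Gamma((ker\pi_*)^\perp)$, I would begin from the pointwise K\"ahler identity
\begin{equation*}
\pi_*Y \;=\; \cos^2\theta\, \pi_*Y - \delta(\alpha\pi_*Y) - B\delta\pi_*Y - C\delta\pi_*Y,
\end{equation*}
obtained by applying \eqref{DJF} and \eqref{DJV} to the tautology $\pi_*Y = -J'J'\pi_*Y$ and then invoking Theorem \ref{NSCfor Slant} to identify $\alpha^2\pi_*Y$ with $-\cos^2\theta\,\pi_*Y$. Differentiating by $\nabla^{Q_\pi}_X$ (legal because the slant angle $\theta$ is constant), applying \eqref{S_V} to each of the normal sections $\delta(\alpha\pi_*Y)$ and $C\delta\pi_*Y$, and rewriting $\nabla^{Q_\pi}_X(B\delta\pi_*Y + C\delta\pi_*Y)$ as $J'\nabla^{Q_\pi}_X\delta\pi_*Y$ via $\nabla J' = 0$ and once more via \eqref{S_V}, I arrive at an identity of the form
\begin{equation*}
\sin^2\theta\bigl((\nabla\pi_*)(X, Y) + \pi_*(\nabla^P_X Y)\bigr) + \nabla^{\pi\perp}_X \delta(\alpha\pi_*Y) + B\nabla^{\pi\perp}_X \delta\pi_*Y + C\nabla^{\pi\perp}_X \delta\pi_*Y \;=\; \Sigma,
\end{equation*}
where $\Sigma$ is a sum of shape-operator contributions $S_V\pi_*X$ with $V \in \Gamma((range\pi_*)^\perp)$. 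The Clairaut hypothesis, via Theorem \ref{NSC}, reduces every such term to $-V(f)\pi_*X$, and the standing assumption that $(range\pi_*)^\perp$ is totally geodesic lets me check that $\Sigma$ vanishes (by pairing it against arbitrary $\pi_*Z \in range\pi_*$ and using \eqref{Sv}). Setting $(\nabla\pi_*)(X, Y) = 0$ and using $\sin^2\theta = 1 - \cos^2\theta$ then produces exactly (iii); reading the same identity backwards, with (i) and (ii) in hand, delivers the converse.

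The principal technical obstacle is the tangential-versus-normal bookkeeping in this third block: every insertion of $J'$ splits the resulting vector into a $range\pi_*$ piece and a $(range\pi_*)^\perp$ piece, so the shape-operator terms from \eqref{S_V} and the $(\nabla\pi_*)$-terms from \eqref{SFF} must be chased carefully through the Clairaut--umbilical identity provided by Lemma \ref{Lemma} and Theorem \ref{NSC} before the four $\nabla^{\pi\perp}$-terms of (iii) isolate themselves cleanly. The two structural facts that make the collapse go through are the constancy of $\theta$ on $P$ (so $\cos^2\theta$ commutes with $\nabla^{Q_\pi}_X$) and the K\"ahler parallelism $\nabla J' = 0$ (so $J'$ commutes with $\nabla^{Q_\pi}_X$).
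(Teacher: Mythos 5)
Your block decomposition and your derivation of the horizontal--horizontal identity track the paper's own proof almost step for step: the paper likewise writes $(\nabla\pi_*)(X,Y) = -J'\nabla^{Q_\pi}_X J'\pi_*Y - \pi_*(\nabla^P_X Y)$, expands with \eqref{DJF}, \eqref{DJV}, Theorem \ref{NSCfor Slant}, \eqref{S_V} and \eqref{SFF}, and lands on
\begin{equation*}
\begin{split}
(1-\cos^2\theta)(\nabla\pi_*)(X,Y) ={}& \cos^2\theta\,\pi_*(\nabla^P_X Y) + S_{\delta(\alpha\pi_*Y)}\pi_*X - \nabla^{\pi\perp}_X\delta(\alpha\pi_*Y) \\
& + J'\bigl(S_{\delta\pi_*Y}\pi_*X\bigr) - J'\nabla^{\pi\perp}_X\delta\pi_*Y - \pi_*(\nabla^P_X Y),
\end{split}
\end{equation*}
which is your identity with $\Sigma = S_{\delta(\alpha\pi_*Y)}\pi_*X + J'(S_{\delta\pi_*Y}\pi_*X)$. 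Conditions (i) and (ii) are handled identically in both arguments.

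The genuine gap is in how you dispose of $\Sigma$. After Theorem \ref{NSC} turns each shape-operator term into $-V(f)\pi_*X$ with $V = \delta(\alpha\pi_*Y)$ or $V = \delta\pi_*Y$, what must still be shown is that $V(f) = 0$ for every $V \in \delta(range\pi_*)$ --- otherwise condition (iii) would have to carry the extra terms $-\delta(\alpha\pi_*Y)(f)\pi_*X - \delta\pi_*Y(f)J'\pi_*X$ and the stated equivalence fails. Neither of your proposed mechanisms delivers this. Total geodesy of $(range\pi_*)^\perp$ is a statement about $\nabla^Q_U V$ for normal sections and says nothing about derivatives of $f$ in normal directions. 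Pairing $S_V\pi_*X$ against $\pi_*Z$ via \eqref{Sv} yields $g_Q(V,(\nabla\pi_*)(X,Z))$, which is known to vanish only when $\pi$ is already totally geodesic; so this works at best in the forward implication and is circular in the converse, which is exactly where you need $\Sigma = 0$ to recover $(\nabla\pi_*)(X,Y)=0$ from (iii). The paper closes this hole with its Theorem \ref{condition on f}: assuming $\delta$ is parallel, $f$ is constant on $\delta(range\pi_*)$, hence $\delta(\alpha\pi_*Y)(f) = \delta\pi_*Y(f) = 0$ and $\Sigma$ drops out. You need to import that result (and the parallelism hypothesis it rests on, which the theorem statement itself silently omits) or supply an equivalent argument; as written, your proof of the converse does not go through.
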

\begin{proof}
We know that $\pi$ is totally geodesic if and only if
\begin{equation}\label{tgv}
	(\nabla \pi_*)(U, V) = 0, 
\end{equation}
\begin{equation}\label{tghv}
	(\nabla \pi_*)(X, U) = 0,
\end{equation}
\begin{equation}\label{tgh}
	(\nabla \pi_*)(X, Y) = 0,
\end{equation}
for $U, V\in\Gamma(ker\pi_*)$ and $X, Y\in \Gamma(ker\pi_*)^\perp.$\\
From \eqref{tgv} and \eqref{tghv}, we get that fibers are totally geodesic,   $(ker\pi_*)^\perp$ is totally geodesic, respectively. From \eqref{tgh}, we have
\begin{equation}
	(\nabla \pi_*)(X, Y) = \nabla^{Q_\pi}_X \pi_*Y - \pi_*(\nabla^P_X Y). 
\end{equation} 
Using \eqref{COKM} and \eqref{DJF}, we have
\begin{equation*}
	\begin{split}
		(\nabla \pi_*)(X, Y)& = -J'\nabla^{Q_\pi}_X(\alpha \pi_*Y) - J'\nabla^{Q_\pi}_X(\delta \pi_*Y) - \pi_*(\nabla^P_X Y).		
	\end{split}		
\end{equation*}
Making use of \eqref{COKM} and Theorem (\ref{NSCfor Slant}), we get
\begin{equation*}
	(\nabla \pi_*)(X, Y) = cos^2\theta\nabla^{Q_\pi}_X \pi_*Y - \nabla^{Q_\pi}_X \delta(\alpha \pi_*Y) - J'\nabla^{Q_\pi}_{X}\delta \pi_*Y - \pi_*(\nabla^P_X Y).		
\end{equation*}
Applying  \eqref{S_V} and \eqref{SFF} in above equation, we have  
\begin{equation}
	\begin{split}
		(1-cos^2\theta)(\nabla \pi_*)(X, Y)& = cos^2\theta \pi_*(\nabla^P_X Y) + S_{\delta(\alpha \pi_*Y)}\pi_*X - \nabla^{\pi\perp}_X \delta(\alpha \pi_*Y) \\&+ J'(S_{\delta \pi_*Y}\pi_*X) - J'\nabla^{\pi\perp}_X \delta \pi_*Y - \pi_*(\nabla^P_X Y).
	\end{split} 
\end{equation}
Using \eqref{tgh}, we get 
\begin{equation}
	\begin{split}
		&cos^2\theta \pi_*(\nabla^P_X Y) + S_{\delta(\alpha \pi_*Y)}\pi_*X - \nabla^{\pi\perp}_X \delta(\alpha \pi_*Y) + J'(S_{\delta \pi_*Y}\pi_*X) - J'\nabla^{\pi\perp}_X \delta \pi_*Y\\& - \pi_*(\nabla^P_X Y) = 0.
	\end{split}
\end{equation}

From above equation and Theorem \ref{NSC}, we get \\
\begin{equation}\label{LLT}
	\begin{split}
		&cos^2\theta \pi_*(\nabla^P_X Y) - {\delta(\alpha \pi_*Y)}(f) \pi_*X - \nabla^{\pi\perp}_X \delta(\alpha \pi_*Y) \\&- J'\delta \pi_*Y(f)\pi_*X - B\nabla^{\pi\perp}_X \delta \pi_*Y - C \nabla^{\pi\perp}_X \delta \pi_*Y - \pi_*(\nabla^P_X Y) = 0.
	\end{split}		
\end{equation}		
From Theorem \eqref{condition on f} and \eqref{LLT}, we get the required result.
\end{proof}
\begin{theorem}

Let $\pi$ be a Clairaut slant Riemannian map from a Riemannian manifold $(P, g_P)$ to a K\"ahler manifold $(Q, g_Q, J')$ with $s=e^f$ such that $\delta$ is parallel. Then $Q$ is a locally product manifold of $Q_{(range\pi_*)} \times Q_{(range\pi_*)^\perp}$ if and only if 
\begin{equation}
	g_Q(\nabla^{\pi\perp}_{\pi_*X} \delta(\alpha \pi_*Y) + B\nabla^{\pi\perp}_{\pi_*X} \delta \pi_*Y + C\nabla^{\pi\perp}_{\pi_*X}\delta \pi_*Y, U) = 0
\end{equation}
for $\pi_*X, \pi_*Y \in\Gamma(range\pi_*)~ and ~ U\in\Gamma(range \pi_*)^\perp.$

\end{theorem}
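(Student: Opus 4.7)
The plan is to reduce the locally-product condition to the statement that $range\pi_*$ is totally geodesic in $Q$, and then derive the stated equality from the K\"ahler identity $\nabla J' = 0$, the slant formula of Theorem~\ref{NSCfor Slant}, and the Clairaut information produced by Theorems~\ref{NSC} and~\ref{condition on f}. By the standard de Rham-type splitting, $Q$ is locally $Q_{(range\pi_*)} \times Q_{(range\pi_*)^\perp}$ if and only if both $range\pi_*$ and $(range\pi_*)^\perp$ are totally geodesic in $Q$; since the latter is the standing hypothesis of Section~3, I only need to characterise when $g_Q(\nabla^Q_{\pi_*X}\pi_*Y, U) = 0$ for every $\pi_*X, \pi_*Y \in \Gamma(range\pi_*)$ and $U \in \Gamma(range\pi_*)^\perp$.

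First I would use \eqref{COKM} to write $\nabla^Q_{\pi_*X}\pi_*Y = -J'\nabla^Q_{\pi_*X}(\alpha\pi_*Y) - J'\nabla^Q_{\pi_*X}(\delta\pi_*Y)$, push $J'$ through the covariant derivative on the first summand and apply Theorem~\ref{NSCfor Slant} to replace $J'\alpha\pi_*Y$ by $-\cos^2\theta\,\pi_*Y + \delta(\alpha\pi_*Y)$, and on the second summand first split via \eqref{S_V} into $-S_{\delta\pi_*Y}\pi_*X + \nabla^{\pi\perp}_X\delta\pi_*Y$ before applying $J' = \alpha + \delta$ on the $range\pi_*$-piece and $J' = B + C$ on the $(range\pi_*)^\perp$-piece. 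Moving the $\cos^2\theta\,\nabla^Q_{\pi_*X}\pi_*Y$ contribution to the left-hand side should then produce
\[
\sin^2\theta\,\nabla^Q_{\pi_*X}\pi_*Y = -\nabla^Q_{\pi_*X}\delta(\alpha\pi_*Y) + \alpha S_{\delta\pi_*Y}\pi_*X + \delta S_{\delta\pi_*Y}\pi_*X - B\nabla^{\pi\perp}_X\delta\pi_*Y - C\nabla^{\pi\perp}_X\delta\pi_*Y.
\]

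Next I would feed in the Clairaut structure. Theorem~\ref{NSC}(i) yields $S_{\delta\pi_*Y}\pi_*X = -(\delta\pi_*Y)(f)\,\pi_*X$, and Theorem~\ref{condition on f}, which is precisely where the $\delta$-parallel hypothesis is consumed, shows that $f$ is constant on $\delta(range\pi_*)$, so $(\delta\pi_*Y)(f) = 0$ and hence $S_{\delta\pi_*Y}\pi_*X = 0$. Pairing the displayed identity with $U \in \Gamma(range\pi_*)^\perp$, the $\nabla^Q_{\pi_*X}\delta(\alpha\pi_*Y)$-term collapses via \eqref{S_V} to $g_Q(\nabla^{\pi\perp}_X\delta(\alpha\pi_*Y), U)$ (the $S$-part is tangent to $range\pi_*$ and dies against $U$), and $g_Q(B\nabla^{\pi\perp}_X\delta\pi_*Y, U) = 0$ since $B(\cdot) \in range\pi_*$; this last observation lets me harmlessly reinsert the $B$-term into the cluster to match the statement verbatim. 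Dividing by $\sin^2\theta \neq 0$ then gives the claimed equivalence in both directions at once.

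The only genuine obstacle is the piece $\delta S_{\delta\pi_*Y}\pi_*X$ appearing above: it lies in $(range\pi_*)^\perp$ and would otherwise survive the pairing with $U$ and spoil the equivalence, leaving behind an uncontrolled trace-type term. Killing it is exactly what the parallelism of $\delta$, routed through Theorem~\ref{condition on f}, provides, and this is why that hypothesis is loaded into the statement.
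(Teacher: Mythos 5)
Your proposal is correct and follows essentially the same route as the paper: both reduce the locally product condition to total geodesy of $range\pi_*$ (using the standing hypothesis on $(range\pi_*)^\perp$), apply the K\"ahler identity together with Theorem~\ref{NSCfor Slant} to isolate a $\sin^2\theta\, g_Q(\nabla^Q_{\pi_*X}\pi_*Y, U)$ term, and invoke Theorem~\ref{NSC} plus Theorem~\ref{condition on f} to kill the $S$-contributions. Your explicit observation that only the $\delta S_{\delta\pi_*Y}\pi_*X$ piece genuinely needs the $\delta$-parallel hypothesis (the other $S$-terms being tangent to $range\pi_*$ and thus orthogonal to $U$) is a slightly sharper accounting than the paper gives, but the argument is the same.
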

\begin{proof}
Since $Q$ is K\"ahler manifold, we have
\begin{equation}\label{TGC}
	g_Q(\nabla^Q_{\pi_*X} \pi_*Y, U) = g_Q(\nabla^Q_{\pi_*X} J'\pi_*Y, J' U)
\end{equation}

for $\pi_*X, \pi_*Y \in \Gamma(range\pi_*)$ and $U \in\Gamma(range\pi_*)^\perp.$\\
Using \eqref{DJF} in \eqref{TGC}, we get
\begin{equation}
	g_Q(\nabla^Q_{\pi_*X} \pi_*Y, U) = -g_Q(\nabla^Q_{\pi_*X}\alpha^2\pi_*Y + \delta(\alpha \pi_* Y) + J'\nabla^Q_{\pi_*X}\delta \pi_*Y, U).
\end{equation}
Using Theorem \ref{NSCfor Slant} and \eqref{Sv}, we have
\begin{equation}
	\begin{split}
		g_Q(\nabla^Q_{\pi_*X} \pi_*Y, U) =& cos^2\theta g_Q(\nabla^Q_{\pi_*X} \pi_*Y, U) - g_Q(-S_{\delta(\alpha \pi_*Y)}\pi_*X \\&+ \nabla^{\pi\perp}_{\pi_*X}\delta(\alpha \pi_*Y) +J'(-S_{\delta \pi_*Y} \pi_*X + \nabla^{\pi\perp}_X \delta\pi_*Y ), U).	 		
	\end{split} 
\end{equation}
Using Theorem \ref{NSC}, we obtain
\begin{equation}
	\begin{split}
		sin^2\theta g_Q(\nabla^Q_{\pi_*X} \pi_*Y, U) =& -g_Q(\delta(\alpha \pi_*Y)(f)\pi_*X + \nabla^{\pi\perp}_{\pi_*X}\delta(\alpha \pi_*Y) \\&+J'(\delta \pi_* Y(f)\pi_*X + \nabla^{\pi\perp}_X \delta \pi_*Y), U).
	\end{split} 
\end{equation}
From Theorem \eqref{condition on f}, we get the required result.
\end{proof}
\begin{theorem}
Let $\pi$ be  a Clairaut slant Riemannian map from a Riemannian manifold $(P, g_P)$
to a K\"ahler manifold $(Q, g_Q, J')$ with $s=e^f.$ Then, $range \pi_*$ is integrable if and only if
\begin{equation}\label{IC}
	\begin{split}
		g_Q(\nabla^{\pi\perp}_Y\delta(\alpha \pi_*X) - g_P(Y, *_{\pi_*}B(\delta \pi_*X))\nabla^Qf+ \nabla^{\pi\perp}_Y C(\delta \pi_*X), U) \\= g_Q(\nabla^{\pi\perp}_X\delta(\alpha \pi_*Y) +\nabla^{\pi\perp}_X C(\delta \pi_*Y)- g_P(X, *_{\pi_*}B(\delta \pi_*Y))\nabla^Qf , U),
	\end{split}
\end{equation}
where $\pi_*X, \pi_*Y \in \Gamma(range \pi_*)$ and $U\in\Gamma(range\pi_*)^\perp.$
\end{theorem}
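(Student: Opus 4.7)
My plan is to characterise integrability of $range\pi_*$ via $g_Q([\pi_*X, \pi_*Y], U) = 0$ for all $\pi_*X, \pi_*Y \in \Gamma(range\pi_*)$ and $U \in \Gamma(range\pi_*)^\perp$, then expand the bracket as $\nabla^Q_{\pi_*X}\pi_*Y - \nabla^Q_{\pi_*Y}\pi_*X$, compute $g_Q(\nabla^Q_{\pi_*X}\pi_*Y, U)$ in closed form, and antisymmetrise in $(X, Y)$.

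The core step parallels the derivation of \eqref{J1}. Using $\nabla^Q J' = 0$ I rewrite $\nabla^Q_{\pi_*X}\pi_*Y = -J'\nabla^Q_{\pi_*X}(J'\pi_*Y)$, substitute $J'\pi_*Y = \alpha \pi_*Y + \delta \pi_*Y$ via \eqref{DJF}, pull $J'$ through $\nabla^Q$ a second time, and expand $J'(\alpha \pi_*Y)$ and $J'(\delta \pi_*Y)$ through \eqref{DJF} and \eqref{DJV}. Applying Theorem \ref{NSCfor Slant} to collapse $\alpha^2\pi_*Y$ into a scalar multiple of $\pi_*Y$ and gathering terms yields
\begin{equation*}
\sin^2\theta\, \nabla^Q_{\pi_*X}\pi_*Y = -\nabla^Q_{\pi_*X}\delta(\alpha \pi_*Y) - \nabla^Q_{\pi_*X}B(\delta \pi_*Y) - \nabla^Q_{\pi_*X}C(\delta \pi_*Y).
\end{equation*}
Pairing with $U$, the two $(range\pi_*)^\perp$-valued pieces $\delta(\alpha \pi_*Y)$ and $C(\delta \pi_*Y)$ are handled by \eqref{S_V}: the tangential $-S_V\pi_*X$ component sits in $range\pi_*$ and vanishes against $U$, leaving only $g_Q(\nabla^{\pi\perp}_X V, U)$. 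For $B(\delta \pi_*Y) \in \Gamma(range\pi_*)$, I write it as $\pi_*(*_{\pi_*}B(\delta \pi_*Y))$ via the adjoint, apply \eqref{SFF}, and invoke Theorem \ref{NSC}(ii) together with Lemma \ref{Lemma} to replace the second fundamental form by $-g_P(X, *_{\pi_*}B(\delta \pi_*Y))\nabla^Q f$; the remainder $\pi_*(\nabla^P_X *_{\pi_*}B(\delta \pi_*Y))$ is tangent to $range\pi_*$ and drops out against $U$.

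Antisymmetrising in $(X, Y)$ and cancelling the common positive factor $\sin^2\theta$ reproduces exactly \eqref{IC}, so both implications follow simultaneously. The main obstacle is the bookkeeping in the key identity above, where pulling $J'$ through $\nabla^Q$ twice generates several families of mixed terms that must be correctly allocated to the $range\pi_*$ and $(range\pi_*)^\perp$ summands before pairing with $U$. A minor subtlety is the implicit hypothesis $\sin\theta \neq 0$: in the invariant case $\theta = 0$ one has $\delta \equiv 0$, so both sides of \eqref{IC} vanish trivially and integrability must be addressed by a separate argument.
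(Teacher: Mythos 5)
Your proposal is correct and follows essentially the same route as the paper: characterise integrability by pairing $[\pi_*X,\pi_*Y]$ with $U\in\Gamma(range\pi_*)^\perp$, expand via $\nabla^Q J'=0$ together with \eqref{DJF}--\eqref{DJV}, collapse $\alpha^2$ through Theorem \ref{NSCfor Slant}, split the resulting terms with \eqref{S_V} and \eqref{SFF}, and replace the second fundamental form by $-g_P(\cdot,\cdot)\nabla^Q f$ using the Clairaut condition of Theorem \ref{NSC}. Your remark that the cancellation of the factor $\sin^2\theta$ requires $\theta\neq 0$ (the invariant case needing separate treatment) is a genuine point the paper passes over silently, and is worth keeping.
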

\begin{proof}
For $\pi_*X, \pi_*Y\in\Gamma(range \pi_*)$ and $U\in\Gamma(range\pi_*)^\perp$, we have 
\begin{equation}
	g_Q([\pi_*X, \pi_*Y], U) = g_Q(\nabla^Q_{\pi_*X}\pi_*Y, U) - g_Q(\nabla^Q_{\pi_*Y}\pi_*X, U).
\end{equation}
Making use of \eqref{COHM}, \eqref{DJF} and \eqref{DJV}, we have

\begin{equation}
	\begin{split}
		g_Q([\pi_*X, \pi_*Y], U) =& -g_Q(\nabla^Q_{\pi_*X}\alpha^2 \pi_*Y + \nabla^Q_{\pi_*X}\delta(\alpha \pi_*Y) + \nabla^Q_{\pi_*X}B(\delta \pi_*Y)\\& +\nabla^Q_{\pi_*X}C(\delta \pi_*Y), U)+ g_Q(\nabla^Q_{\pi_*Y}\alpha^2 \pi_*X \\&+ \nabla^Q_{\pi_*Y}\delta(\alpha \pi_*X)  + \nabla^Q_{\pi_*Y}B(\delta 
		\pi_*X)+\nabla^Q_{\pi_*Y}C(\delta \pi_*X), U).
	\end{split}
\end{equation}
Using Theorem \ref{NSCfor Slant}, we have 
\begin{equation}
	\begin{split}
		g_Q([\pi_*X, \pi_*Y], U) =& -g_Q(-cos^2\theta\nabla^Q_{\pi_*X}\pi_*Y + \nabla^Q_{\pi_*X}\delta(\alpha \pi_*Y)\\&+ \nabla^Q_{\pi_*X}B(\delta \pi_*Y)+\nabla^Q_{\pi_*X}C(\delta \pi_*Y), U) \\&+g_Q(-cos^2\theta\nabla^Q_{\pi_*Y}\pi_*X + \nabla^Q_{\pi_*Y}\delta(\alpha \pi_*X) \\&+ \nabla^Q_{\pi_*Y}B(\delta \pi_*X)+\nabla^Q_{\pi_*Y}C(\delta \pi_*X), U).
	\end{split}
\end{equation}
Using \eqref{S_V} and \eqref{SFF}, we get
\begin{equation}
	\begin{split}
		(1-cos^2\theta)g_Q(\nabla^Q_{\pi_*X}\pi_*Y - \nabla^Q_{\pi_*Y}\pi_*X, U) =& -g_Q(\nabla^{\pi\perp}_X\delta(\alpha \pi_*Y) \\&+ (\nabla \pi_*)(X, *_{\pi_*}B(\delta \pi_*Y))\\&+ \nabla^{\pi\perp}_X C(\delta \pi_*Y), U) \\&+ g_Q(\nabla^{\pi\perp}_Y\delta(\alpha \pi_*X) \\&+ (\nabla \pi_*)(Y, *_{\pi_*}B(\delta \pi_*X)) \\&+ \nabla^{\pi\perp}_Y C(\delta \pi_*X), U).		
	\end{split}
\end{equation}
Since $\pi$ is Clairaut Riemannian map, by using Theorem \ref{NSC}, we get \eqref{IC}. \\
This completes the proof.

\end{proof}

\begin{theorem}
\label{THRminimal}\cite{S25} Let $\pi $ be a Clairaut Riemannian map with $%
s=e^{f}$ between Riemannian manifolds $(P,g_{P})$ and $(Q,g_{Q},)$
such that $ker\pi _{\ast }$ is minimal. Then, $\pi $ is harmonic if and only
if $f$ is constant.
\end{theorem}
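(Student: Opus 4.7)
The plan is to compute the tension field $\tau(\pi) = \sum_{i=1}^{m}(\nabla \pi_{*})(e_{i},e_{i})$ directly on a frame adapted to the decomposition $TP = \ker\pi_{*} \oplus (\ker\pi_{*})^{\perp}$, and to use the Clairaut characterization of Theorem \ref{NSC}(ii) together with the umbilicity Lemma \ref{Lemma} to evaluate the horizontal contribution. The minimality of $\ker\pi_{*}$ will kill the vertical contribution, so that $\tau(\pi)$ reduces to a scalar multiple of $\nabla^{Q}f$, from which the equivalence follows.

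Concretely, I would fix an orthonormal frame $\{e_{1},\dots,e_{r},e_{r+1},\dots,e_{m}\}$ on $P$ with $\{e_{1},\dots,e_{r}\}\subset\Gamma(\ker\pi_{*})$ and $\{e_{r+1},\dots,e_{m}\}\subset\Gamma(\ker\pi_{*})^{\perp}$, and split
\[
\tau(\pi)=\sum_{i=1}^{r}(\nabla\pi_{*})(e_{i},e_{i})+\sum_{j=r+1}^{m}(\nabla\pi_{*})(e_{j},e_{j}).
\]
For the vertical terms, $\pi_{*}e_{i}=0$, so \eqref{SFF} gives $(\nabla\pi_{*})(e_{i},e_{i})=-\pi_{*}(\nabla^{P}_{e_{i}}e_{i})$. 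Summing and using that $\pi_{*}$ annihilates vertical vectors, only the $\mathcal{H}$-part of $\sum_{i=1}^{r}\nabla^{P}_{e_{i}}e_{i}$ survives; but this is (up to sign) the mean curvature vector of the fibers, which vanishes by the minimality hypothesis. Hence the vertical sum is zero.

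For the horizontal terms, since $\pi$ is Clairaut with $s=e^{f}$, Theorem \ref{NSC}(ii) asserts that $\pi$ is umbilical with mean curvature $H=-\nabla^{Q}f$. Applying Lemma \ref{Lemma} then yields $(\nabla\pi_{*})(e_{j},e_{j})=g_{P}(e_{j},e_{j})H=H$, so
\[
\sum_{j=r+1}^{m}(\nabla\pi_{*})(e_{j},e_{j})=(m-r)H=-n\,\nabla^{Q}f,
\]
where $n=\mathrm{rank}\,\pi=m-r$. Thus $\tau(\pi)=-n\,\nabla^{Q}f$, and since $n>0$, $\tau(\pi)=0$ if and only if $\nabla^{Q}f=0$, i.e.\ $f$ is constant.

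The only delicate point is the vertical-sum calculation: one must verify that $\pi_{*}\!\left(\sum_{i=1}^{r}\nabla^{P}_{e_{i}}e_{i}\right)$ depends only on the horizontal component, and that this horizontal component coincides with the mean curvature vector of $\ker\pi_{*}$ (so that minimality forces it to vanish). Everything else is a routine consequence of the umbilicity characterization of Clairaut Riemannian maps combined with the trace definition of the tension field, so I do not expect any other substantive obstacle.
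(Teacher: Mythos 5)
The paper does not actually prove this statement: it is imported verbatim from \cite{S25} as Theorem \ref{THRminimal}, so there is no internal proof to compare against. Your argument is correct and is essentially the standard derivation (the one used in the cited source): the vertical trace of $\nabla\pi_*$ reduces to $-\pi_*$ applied to the horizontal part of $\sum_i\nabla^P_{e_i}e_i$, i.e.\ to the fibers' mean curvature, which vanishes by minimality, while the horizontal trace equals $(m-r)H=-n\,\nabla^{Q}f$ by the umbilicity characterization in Theorem \ref{NSC}(ii) combined with Lemma \ref{Lemma}. The only point you leave tacit is that passing from $\nabla^{Q}f=0$ (which a priori holds along the image of $\pi$) to ``$f$ is constant'' uses connectedness, as in the hypotheses of Theorem \ref{NSC}; with that remark your proof is complete.
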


\begin{theorem}
Let $\pi $ be Clairaut slant Riemannian map with $s=e^{f}$ from a Riemannian
manifold $(P,g_{P})$ to a K\"ahler manifold $(Q,g_{Q},J^{\prime })$
such that $ker\pi _{\ast }$ is minimal. Then, $\pi $ is harmonic if and only
if 
\begin{equation*}
	\frac{\csc ^{2}\theta }{q}trace\left\{ -\nabla _{X}^{\pi \perp }\delta
	\alpha \pi _{\ast }X+\delta S_{\delta \pi _{\ast }X}\pi _{\ast }X-C\nabla
	_{X}^{\pi \perp }\delta \pi _{\ast }X\right\} =0,
\end{equation*}

where $X\in \Gamma (ker\pi _{\ast })^{\perp }.$
\end{theorem}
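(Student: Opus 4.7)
The plan is to derive a pointwise formula for $(\nabla\pi_*)(X,X)$ in terms of exactly the three quantities inside the trace, and then sum over an orthonormal horizontal frame while using minimality of $ker\pi_*$.

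First I would apply the K\"ahler compatibility $\nabla^Q J' = 0$ to the normal component $\delta\pi_*X$, which gives $\nabla^Q_{\pi_*X}(J'\delta\pi_*X) = J'\nabla^Q_{\pi_*X}\delta\pi_*X$. On the right I expand $\nabla^Q_{\pi_*X}\delta\pi_*X = -S_{\delta\pi_*X}\pi_*X + \nabla^{\pi\perp}_X\delta\pi_*X$ via \eqref{S_V}, and then apply $J'$ using the splittings \eqref{DJF} and \eqref{DJV} to produce terms in $\alpha,\delta,B,C$. On the left I write $J'\delta\pi_*X = B\delta\pi_*X + C\delta\pi_*X$ and invoke two algebraic slant identities obtained by applying $J'$ once more to \eqref{DJF}: the relation $-\pi_*X = \alpha^2\pi_*X + \delta\alpha\pi_*X + B\delta\pi_*X + C\delta\pi_*X$, combined with Theorem \ref{NSCfor Slant}, gives $B\delta\pi_*X = -\sin^2\theta\,\pi_*X$ in the $range\pi_*$-part and $C\delta\pi_*X = -\delta\alpha\pi_*X$ in the $(range\pi_*)^\perp$-part. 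The first identity converts $\nabla^Q_{\pi_*X}B\delta\pi_*X$ into $-\sin^2\theta\bigl[(\nabla\pi_*)(X,X)+\pi_*(\nabla^P_X X)\bigr]$, which is what injects $(\nabla\pi_*)(X,X)$ into the equation, while the second rewrites $\nabla^{\pi\perp}_X C\delta\pi_*X$ as $-\nabla^{\pi\perp}_X\delta\alpha\pi_*X$, producing one of the wanted trace terms.

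I then project the resulting identity onto $(range\pi_*)^\perp$. The $range\pi_*$-valued pieces $\alpha S_{\delta\pi_*X}\pi_*X$, $B\nabla^{\pi\perp}_X\delta\pi_*X$, $\pi_*(\nabla^P_X X)$ and $S_{C\delta\pi_*X}\pi_*X$ all drop out, leaving the clean identity
\begin{equation*}
\sin^2\theta\,(\nabla\pi_*)(X,X) = -\nabla^{\pi\perp}_X\delta\alpha\pi_*X + \delta S_{\delta\pi_*X}\pi_*X - C\nabla^{\pi\perp}_X\delta\pi_*X.
\end{equation*}

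Finally I trace against an orthonormal frame $\{e_j\}_{j=1}^{q}$ of $(ker\pi_*)^\perp$. For any vertical orthonormal frame $\{v_k\}$ one has $(\nabla\pi_*)(v_k,v_k) = -\pi_*(\nabla^P_{v_k}v_k)$, so minimality of $ker\pi_*$ forces the vertical contribution to the tension field to vanish, giving $\tau(\pi) = \sum_j(\nabla\pi_*)(e_j,e_j)$. Dividing the boxed identity by $q\sin^2\theta$ and summing then yields $\tfrac{1}{q}\tau(\pi) = \tfrac{\csc^2\theta}{q}\,\text{trace}\{-\nabla^{\pi\perp}_X\delta\alpha\pi_*X + \delta S_{\delta\pi_*X}\pi_*X - C\nabla^{\pi\perp}_X\delta\pi_*X\}$, so harmonicity $\tau(\pi)=0$ is equivalent to the stated trace condition; alternatively, combining Theorems \ref{THRminimal} and \ref{NSC}(ii) gives $\tau(\pi) = -q\nabla^Q f$, and the conclusion translates into $f$ being constant. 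The principal difficulty is the bookkeeping in the projection step: one must carefully track which of the $\alpha,\delta,B,C$-images live in $range\pi_*$ and which in $(range\pi_*)^\perp$, and invoke the slant identities $B\delta\pi_*X = -\sin^2\theta\,\pi_*X$ and $C\delta\pi_*X = -\delta\alpha\pi_*X$ at exactly the right moment so that only the three desired terms survive.
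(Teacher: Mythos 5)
Your proposal is correct and follows essentially the same route as the paper: both derive the pointwise identity $\sin^2\theta\,(\nabla\pi_*)(X,X)=-\nabla^{\pi\perp}_X\delta\alpha\pi_*X+\delta S_{\delta\pi_*X}\pi_*X-C\nabla^{\pi\perp}_X\delta\pi_*X$ from the K\"ahler condition together with $\alpha^2=-\cos^2\theta$, project onto $(range\,\pi_*)^\perp$, and then trace using minimality of the fibers, concluding via Theorem \ref{THRminimal} (or equivalently the direct tension-field computation you give). The only difference is cosmetic: you apply $\nabla J'=0$ to $\delta\pi_*X$ and invoke $B\delta\pi_*X=-\sin^2\theta\,\pi_*X$ and $C\delta\pi_*X=-\delta\alpha\pi_*X$, whereas the paper writes $(\nabla\pi_*)(X,Y)=-J'\nabla^{Q_\pi}_XJ'\pi_*Y-\pi_*(\nabla^P_XY)$ and collects the $\cos^2\theta$ term, but both manipulations land on the identical intermediate equation.
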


\begin{proof}
Using \ref{S_V} and the property $J^{\prime 2}=-I,$ we get%
\begin{equation*}
	(\nabla \pi _{\ast })(X,Y)=-J^{\prime }\nabla _{X}^{Q_{\pi }}J^{\prime }\pi
	_{\ast }Y-\pi _{\ast }(\nabla _{X}^{P}Y),
\end{equation*}

for any $X,Y\in \Gamma (ker\pi _{\ast })^{\perp }.$ Using \eqref{DJF}, \eqref%
{DJV}, \eqref{S_V} and Theorem \ref{NSCfor Slant}, we have%
\begin{align*}
(\nabla \pi _{\ast })(&X,Y) =\cos ^{2}\theta ((\nabla \pi _{\ast
})(X,Y)+\pi _{\ast }(\nabla _{X}^{P}Y))-\nabla _{X}^{Q_{\pi }}\delta \alpha
\pi _{\ast }Y \\
&+\alpha S_{\delta \pi _{\ast }Y}\pi _{\ast }X+\delta S_{\delta \pi _{\ast
	}Y}\pi _{\ast }X-B\nabla _{X}^{\pi \perp }\delta \pi _{\ast }Y-C\nabla
_{X}^{\pi \perp }\delta \pi _{\ast }Y-\pi _{\ast }(\nabla _{X}^{P}Y)
\end{align*}

which can be written as%
\begin{align}
\sin ^{2}\theta (\nabla \pi _{\ast })(&X,Y) =-\sin ^{2}\theta \pi _{\ast
}(\nabla _{X}^{P}Y)+S_{\delta \alpha \pi _{\ast }Y}\pi _{\ast }X-\nabla
_{X}^{\pi \perp }\delta \alpha \pi _{\ast }Y  \label{3.38} \notag\\
&+\alpha S_{\delta \pi _{\ast }Y}\pi _{\ast }X+\delta S_{\delta \pi _{\ast
	}Y}\pi _{\ast }X-B\nabla _{X}^{\pi \perp }\delta \pi _{\ast }Y-C\nabla
_{X}^{\pi \perp }\delta \pi _{\ast }Y.  
\end{align}

Taking $X$ instead of $Y$ in \eqref{3.38}, we have%
\begin{align*}
		\sin ^{2}\theta (\nabla \pi _{\ast })(&X,X) =-\sin ^{2}\theta \pi _{\ast
	}(\nabla _{X}^{P}X)+S_{\delta \alpha \pi _{\ast }X}\pi _{\ast }X-\nabla
	_{X}^{\pi \perp }\delta \alpha \pi _{\ast }X \\
	&+\alpha S_{\delta \pi _{\ast }X}\pi _{\ast }X+\delta S_{\delta \pi _{\ast
		}X}\pi _{\ast }X-B\nabla _{X}^{\pi \perp }\delta \pi _{\ast }X-C\nabla
	_{X}^{\pi \perp }\delta \pi _{\ast }X.
\end{align*}

Taking $range\pi _{\ast }$ and $(range\pi _{\ast })^{\perp }$ components, we
have%
\begin{equation}
	(\nabla \pi _{\ast })(X,X)^{range\pi _{\ast }}=-\pi _{\ast }(\nabla
	_{X}^{P}X)+\csc ^{2}\theta \left\{ S_{\delta \alpha \pi _{\ast }X}\pi _{\ast
	}X+\alpha S_{\delta \pi _{\ast }X}\pi _{\ast }X-B\nabla _{X}^{\pi \perp
	}\delta \pi _{\ast }X\right\}  \label{3.39}
\end{equation}

and 
\begin{equation*}
	(\nabla \pi _{\ast })(X,X)^{(range\pi _{\ast })^{\perp }}=\csc ^{2}\theta
	\left\{ -\nabla _{X}^{\pi \perp }\delta \alpha \pi _{\ast }X+\delta
	S_{\delta \pi _{\ast }X}\pi _{\ast }X-C\nabla _{X}^{\pi \perp }\delta \pi
	_{\ast }X\right\} .
\end{equation*}

We know that $range\pi _{\ast }$ part of $(\nabla \pi _{\ast })(X,X)$ is
equal to $0,$ i.e.%
\begin{equation*}
	\csc ^{2}\theta (S_{\delta \alpha \pi _{\ast }X}\pi _{\ast }X+\alpha
	S_{\delta \pi _{\ast }X}\pi _{\ast }X-B\nabla _{X}^{\pi \perp }\delta \pi
	_{\ast }X)-\pi _{\ast }(\nabla _{X}^{P}X)=0.
\end{equation*}
Therefore,%
\begin{equation}
	(\nabla \pi _{\ast })(X,X)=\csc ^{2}\theta \left\{ -\nabla _{X}^{\pi \perp
	}\delta \alpha \pi _{\ast }X+\delta S_{\delta \pi _{\ast }X}\pi _{\ast
	}X-C\nabla _{X}^{\pi \perp }\delta \pi _{\ast }X\right\}.   \label{3.40}
\end{equation}
Taking trace \eqref{3.40}, we have 
\begin{equation*}
	\nabla ^{Q}f=\frac{\csc ^{2}\theta }{q}trace\left\{ -\nabla _{X}^{\pi \perp
	}\delta \alpha \pi _{\ast }X+\delta S_{\delta \pi _{\ast }X}\pi _{\ast
	}X-C\nabla _{X}^{\pi \perp }\delta \pi _{\ast }X\right\} .
\end{equation*}

Since $\pi $ is a Clairaut slant Riemannian map with minimal fibers then
proof follows by Theorem \ref{THRminimal}.
\end{proof}

Now, we obtain two inequalities in terms of $(\nabla \pi _{\ast })(X,Y)$ of
Clairaut slant Riemannian map and check the equality case.

\begin{theorem}
Let $\pi $ be a Clairaut slant Riemannian map with $s=e^{f}$ from a Riemannian
manifold $(P,g_{P})$ to a K\"ahler manifold $(Q,g_{Q},J^{\prime }).$
Then we have%
\begin{eqnarray*}
	\sin ^{4}\theta \left\Vert (\nabla \pi _{\ast })(X,Y)\right\Vert ^{2} &\geq
	&\left\Vert S_{\delta \alpha \pi _{\ast }Y}\pi _{\ast }X\right\Vert
	^{2}+\left\Vert \nabla _{X}^{\pi \perp }\delta \alpha \pi _{\ast
	}Y\right\Vert ^{2} \\
	&&+\left\Vert B\nabla _{X}^{\pi \perp }\delta \pi _{\ast }Y\right\Vert
	^{2}+\left\Vert C\nabla _{X}^{\pi \perp }\delta \pi _{\ast }Y\right\Vert ^{2}
	\\
	&&+2\left\{ 
	\begin{array}{c}
		-\sin ^{2}\theta g_{Q}(\pi _{\ast }(\nabla _{X}^{P}Y),S_{\delta \alpha \pi
			_{\ast }Y}\pi _{\ast }X)-\sin ^{2}\theta g_{Q}(\pi _{\ast }(\nabla
		_{X}^{P}Y),\alpha S_{\delta \pi _{\ast }Y}\pi _{\ast }X) \\ 
		+\sin ^{2}\theta g_{Q}(\pi _{\ast }(\nabla _{X}^{P}Y),B\nabla _{X}^{\pi
			\perp }\delta \pi _{\ast }Y)+g_{Q}(S_{\delta \alpha \pi _{\ast }Y}\pi _{\ast
		}X,\alpha S_{\delta \pi _{\ast }Y}\pi _{\ast }X) \\ 
		-g_{Q}(S_{\delta \alpha \pi _{\ast }Y}\pi _{\ast }X,B\nabla _{X}^{\pi \perp
		}\delta \pi _{\ast }Y)-g_{Q}(\nabla _{X}^{\pi \perp }\delta \alpha \pi
		_{\ast }Y,\delta S_{\delta \pi _{\ast }Y}\pi _{\ast }X) \\ 
		+g_{Q}(\nabla _{X}^{\pi \perp }\delta \alpha \pi _{\ast }Y,C\nabla _{X}^{\pi
			\perp }\delta \pi _{\ast }Y)-g_{Q}(\alpha S_{\delta \pi _{\ast }Y}\pi _{\ast
		}X,B\nabla _{X}^{\pi \perp }\delta \pi _{\ast }Y) \\ 
		-g_{Q}(\delta S_{\delta \pi _{\ast }Y}\pi _{\ast }X,C\nabla _{X}^{\pi \perp
		}\delta \pi _{\ast }Y)%
	\end{array}%
	\right\}
\end{eqnarray*}

the inequality is satisfied if and only if $\pi $ is a Clairaut slant
Riemannian map. In the equality case, it takes the following form:%
\begin{eqnarray*}
	\sin ^{4}\theta \left\Vert (\nabla \pi _{\ast })(X,Y)\right\Vert ^{2}
	&=&\left\Vert S_{\delta \alpha \pi _{\ast }Y}\pi _{\ast }X\right\Vert
	^{2}+\left\Vert \nabla _{X}^{\pi \perp }\delta \alpha \pi _{\ast
	}Y\right\Vert ^{2} \\
	&&+\left\Vert B\nabla _{X}^{\pi \perp }\delta \pi _{\ast }Y\right\Vert
	^{2}+\left\Vert C\nabla _{X}^{\pi \perp }\delta \pi _{\ast }Y\right\Vert ^{2}
	\\
	&&+2\left\{ 
	\begin{array}{c}
		-\sin ^{2}\theta g_{Q}(\pi _{\ast }(\nabla _{X}^{P}Y),S_{\delta \alpha \pi
			_{\ast }Y}\pi _{\ast }X)-\sin ^{2}\theta g_{Q}(\pi _{\ast }(\nabla
		_{X}^{P}Y),\alpha S_{\delta \pi _{\ast }Y}\pi _{\ast }X) \\ 
		+\sin ^{2}\theta g_{Q}(\pi _{\ast }(\nabla _{X}^{P}Y),B\nabla _{X}^{\pi
			\perp }\delta \pi _{\ast }Y)+g_{Q}(S_{\delta \alpha \pi _{\ast }Y}\pi _{\ast
		}X,\alpha S_{\delta \pi _{\ast }Y}\pi _{\ast }X) \\ 
		-g_{Q}(S_{\delta \alpha \pi _{\ast }Y}\pi _{\ast }X,B\nabla _{X}^{\pi \perp
		}\delta \pi _{\ast }Y)-g_{Q}(\nabla _{X}^{\pi \perp }\delta \alpha \pi
		_{\ast }Y,\delta S_{\delta \pi _{\ast }Y}\pi _{\ast }X) \\ 
		+g_{Q}(\nabla _{X}^{\pi \perp }\delta \alpha \pi _{\ast }Y,C\nabla _{X}^{\pi
			\perp }\delta \pi _{\ast }Y)-g_{Q}(\alpha S_{\delta \pi _{\ast }Y}\pi _{\ast
		}X,B\nabla _{X}^{\pi \perp }\delta \pi _{\ast }Y) \\ 
		-g_{Q}(\delta S_{\delta \pi _{\ast }Y}\pi _{\ast }X,C\nabla _{X}^{\pi \perp
		}\delta \pi _{\ast }Y)%
	\end{array}%
	\right\}
\end{eqnarray*}
\end{theorem}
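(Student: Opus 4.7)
The plan is to apply the norm-squared identity directly to formula \eqref{3.38} derived in the proof of the previous theorem. Concretely, I would rewrite \eqref{3.38} as
$$\sin^{2}\theta\, (\nabla \pi_{\ast})(X,Y) = A_1 + A_2 + A_3 + A_4 + A_5 + A_6 + A_7,$$
where $A_1 = -\sin^{2}\theta\,\pi_{\ast}(\nabla^P_X Y)$, $A_2 = S_{\delta\alpha\pi_{\ast}Y}\pi_{\ast}X$, $A_3 = -\nabla^{\pi\perp}_X \delta\alpha\pi_{\ast}Y$, $A_4 = \alpha S_{\delta\pi_{\ast}Y}\pi_{\ast}X$, $A_5 = \delta S_{\delta\pi_{\ast}Y}\pi_{\ast}X$, $A_6 = -B\nabla^{\pi\perp}_X \delta\pi_{\ast}Y$ and $A_7 = -C\nabla^{\pi\perp}_X \delta\pi_{\ast}Y$, and then take $g_Q(\cdot,\cdot)$ of this identity with itself.

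The key structural observation is the splitting by target subspace: by the defining decompositions \eqref{DJF} and \eqref{DJV} together with \eqref{S_V}, the four components $A_1, A_2, A_4, A_6$ lie in $\Gamma(range\pi_{\ast})$, while the three components $A_3, A_5, A_7$ lie in $\Gamma(range\pi_{\ast})^{\perp}$. Consequently every cross product $g_Q(A_i,A_j)$ mixing the two subspaces vanishes, leaving exactly the six intra-$range\pi_{\ast}$ pairs $(1,2),(1,4),(1,6),(2,4),(2,6),(4,6)$ and the three intra-perpendicular pairs $(3,5),(3,7),(5,7)$, matching the nine inner products displayed in the statement.

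Bilinearity then gives
$$\sin^{4}\theta\, \|(\nabla \pi_{\ast})(X,Y)\|^{2} = \sum_{i=1}^{7} \|A_i\|^{2} + 2\!\!\sum_{\substack{i<j\\ A_i,A_j\text{ same subspace}}}\!\! g_Q(A_i,A_j),$$
and a careful sign check reproduces the braced list; for instance the pair $(A_1,A_6)$ contributes $+\sin^2\theta\, g_Q(\pi_{\ast}\nabla^P_XY,\,B\nabla^{\pi\perp}_X\delta\pi_{\ast}Y)$ because both $A_1$ and $A_6$ carry a leading minus, while $(A_2,A_6)$ contributes a negative, $(A_5,A_7)$ a negative, and so on. The stated inequality is then obtained by dropping the three nonnegative squared-norm contributions $\|A_1\|^{2} = \sin^{4}\theta\|\pi_{\ast}(\nabla^P_XY)\|^{2}$, $\|A_4\|^{2} = \|\alpha S_{\delta\pi_{\ast}Y}\pi_{\ast}X\|^{2}$ and $\|A_5\|^{2} = \|\delta S_{\delta\pi_{\ast}Y}\pi_{\ast}X\|^{2}$ from the right-hand side.

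The main obstacle is not mathematical but purely clerical: tracking the signs of the nine surviving cross products. No further appeal to the K\"ahler condition, to Theorem \ref{NSC}, or to Theorem \ref{NSCfor Slant} is needed beyond what is already encoded in \eqref{3.38}. The equality case corresponds to the identity before the three positive squared-norms are discarded, i.e.\ to the simultaneous vanishing of $\pi_{\ast}(\nabla^P_X Y)$, $\alpha S_{\delta\pi_{\ast}Y}\pi_{\ast}X$ and $\delta S_{\delta\pi_{\ast}Y}\pi_{\ast}X$.
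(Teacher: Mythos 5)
Your proposal is correct and follows essentially the same route as the paper: square the identity \eqref{3.38}, use the orthogonality of the $range\pi_{\ast}$ and $(range\pi_{\ast})^{\perp}$ components to eliminate all mixed cross terms (leaving exactly the nine displayed inner products with the signs you computed), and discard nonnegative squared norms to obtain the inequality. The only difference is that your expansion correctly retains $\Vert \alpha S_{\delta \pi_{\ast}Y}\pi_{\ast}X\Vert^{2}$ and $\Vert \delta S_{\delta \pi_{\ast}Y}\pi_{\ast}X\Vert^{2}$, which the paper's intermediate identity \eqref{3.41} silently omits, so you drop three nonnegative terms rather than one; this yields the same stated inequality and, as a bonus, the precise characterization of equality (simultaneous vanishing of $\pi_{\ast}(\nabla^{P}_{X}Y)$, $\alpha S_{\delta \pi_{\ast}Y}\pi_{\ast}X$ and $\delta S_{\delta \pi_{\ast}Y}\pi_{\ast}X$) that the paper does not supply.
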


\begin{proof}
By taking the product of \eqref{3.38} by itself, we obtain%
\begin{eqnarray}
	\sin ^{4}\theta \left\Vert (\nabla \pi _{\ast })(X,Y)\right\Vert ^{2}
	&=&\sin ^{4}\theta \left\Vert \pi _{\ast }(\nabla _{X}^{P}Y)\right\Vert
	^{2}+\left\Vert S_{\delta \alpha \pi _{\ast }Y}\pi _{\ast }X\right\Vert
	^{2}+\left\Vert \nabla _{X}^{\pi \perp }\delta \alpha \pi _{\ast
	}Y\right\Vert ^{2}  \label{3.41} \\
	&&+\left\Vert B\nabla _{X}^{\pi \perp }\delta \pi _{\ast }Y\right\Vert
	^{2}+\left\Vert C\nabla _{X}^{\pi \perp }\delta \pi _{\ast }Y\right\Vert ^{2}
	\notag \\
	&&+2\left\{ 
	\begin{array}{c}
		-\sin ^{2}\theta g_{Q}(\pi _{\ast }(\nabla _{X}^{P}Y),S_{\delta \alpha \pi
			_{\ast }Y}\pi _{\ast }X)-\sin ^{2}\theta g_{Q}(\pi _{\ast }(\nabla
		_{X}^{P}Y),\alpha S_{\delta \pi _{\ast }Y}\pi _{\ast }X) \\ 
		+\sin ^{2}\theta g_{Q}(\pi _{\ast }(\nabla _{X}^{P}Y),B\nabla _{X}^{\pi
			\perp }\delta \pi _{\ast }Y)+g_{Q}(S_{\delta \alpha \pi _{\ast }Y}\pi _{\ast
		}X,\alpha S_{\delta \pi _{\ast }Y}\pi _{\ast }X) \\ 
		-g_{Q}(S_{\delta \alpha \pi _{\ast }Y}\pi _{\ast }X,B\nabla _{X}^{\pi \perp
		}\delta \pi _{\ast }Y)-g_{Q}(\nabla _{X}^{\pi \perp }\delta \alpha \pi
		_{\ast }Y,\delta S_{\delta \pi _{\ast }Y}\pi _{\ast }X) \\ 
		+g_{Q}(\nabla _{X}^{\pi \perp }\delta \alpha \pi _{\ast }Y,C\nabla _{X}^{\pi
			\perp }\delta \pi _{\ast }Y)-g_{Q}(\alpha S_{\delta \pi _{\ast }Y}\pi _{\ast
		}X,B\nabla _{X}^{\pi \perp }\delta \pi _{\ast }Y) \\ 
		-g_{Q}(\delta S_{\delta \pi _{\ast }Y}\pi _{\ast }X,C\nabla _{X}^{\pi \perp
		}\delta \pi _{\ast }Y)%
	\end{array}%
	\right\}  \notag
\end{eqnarray}

for any $X,Y\in \Gamma (ker\pi _{\ast })^{\perp }.$ From \eqref{3.41}, we get%
\begin{eqnarray*}
	\sin ^{4}\theta \left\Vert (\nabla \pi _{\ast })(X,Y)\right\Vert ^{2} &\geq
	&\left\Vert S_{\delta \alpha \pi _{\ast }Y}\pi _{\ast }X\right\Vert
	^{2}+\left\Vert \nabla _{X}^{\pi \perp }\delta \alpha \pi _{\ast
	}Y\right\Vert ^{2}+\left\Vert B\nabla _{X}^{\pi \perp }\delta \pi _{\ast
	}Y\right\Vert ^{2}+\left\Vert C\nabla _{X}^{\pi \perp }\delta \pi _{\ast
	}Y\right\Vert ^{2} \\
	&&+2\left\{ 
	\begin{array}{c}
		-\sin ^{2}\theta g_{Q}(\pi _{\ast }(\nabla _{X}^{P}Y),S_{\delta \alpha \pi
			_{\ast }Y}\pi _{\ast }X)-\sin ^{2}\theta g_{Q}(\pi _{\ast }(\nabla
		_{X}^{P}Y),\alpha S_{\delta \pi _{\ast }Y}\pi _{\ast }X) \\ 
		+\sin ^{2}\theta g_{Q}(\pi _{\ast }(\nabla _{X}^{P}Y),B\nabla _{X}^{\pi
			\perp }\delta \pi _{\ast }Y)+g_{Q}(S_{\delta \alpha \pi _{\ast }Y}\pi _{\ast
		}X,\alpha S_{\delta \pi _{\ast }Y}\pi _{\ast }X) \\ 
		-g_{Q}(S_{\delta \alpha \pi _{\ast }Y}\pi _{\ast }X,B\nabla _{X}^{\pi \perp
		}\delta \pi _{\ast }Y)-g_{Q}(\nabla _{X}^{\pi \perp }\delta \alpha \pi
		_{\ast }Y,\delta S_{\delta \pi _{\ast }Y}\pi _{\ast }X) \\ 
		+g_{Q}(\nabla _{X}^{\pi \perp }\delta \alpha \pi _{\ast }Y,C\nabla _{X}^{\pi
			\perp }\delta \pi _{\ast }Y)-g_{Q}(\alpha S_{\delta \pi _{\ast }Y}\pi _{\ast
		}X,B\nabla _{X}^{\pi \perp }\delta \pi _{\ast }Y) \\ 
		-g_{Q}(\delta S_{\delta \pi _{\ast }Y}\pi _{\ast }X,C\nabla _{X}^{\pi \perp
		}\delta \pi _{\ast }Y)%
	\end{array}%
	\right\}
\end{eqnarray*}

The inequality is satisfied if and only if $\pi $ is Clairaut slant
Riemannian map. In the equality case, it takes the following form:%

\begin{eqnarray*}
	\sin ^{4}\theta \left\Vert (\nabla \pi _{\ast })(X,Y)\right\Vert ^{2}
	&=&\left\Vert S_{\delta \alpha \pi _{\ast }Y}\pi _{\ast }X\right\Vert
	^{2}+\left\Vert \nabla _{X}^{\pi \perp }\delta \alpha \pi _{\ast
	}Y\right\Vert ^{2}+\left\Vert B\nabla _{X}^{\pi \perp }\delta \pi _{\ast
	}Y\right\Vert ^{2}+\left\Vert C\nabla _{X}^{\pi \perp }\delta \pi _{\ast
	}Y\right\Vert ^{2} \\
	&&+2\left\{ 
	\begin{array}{c}
		-\sin ^{2}\theta g_{Q}(\pi _{\ast }(\nabla _{X}^{P}Y),S_{\delta \alpha \pi
			_{\ast }Y}\pi _{\ast }X)-\sin ^{2}\theta g_{Q}(\pi _{\ast }(\nabla
		_{X}^{P}Y),\alpha S_{\delta \pi _{\ast }Y}\pi _{\ast }X) \\ 
		+\sin ^{2}\theta g_{Q}(\pi _{\ast }(\nabla _{X}^{P}Y),B\nabla _{X}^{\pi
			\perp }\delta \pi _{\ast }Y)+g_{Q}(S_{\delta \alpha \pi _{\ast }Y}\pi _{\ast
		}X,\alpha S_{\delta \pi _{\ast }Y}\pi _{\ast }X) \\ 
		-g_{Q}(S_{\delta \alpha \pi _{\ast }Y}\pi _{\ast }X,B\nabla _{X}^{\pi \perp
		}\delta \pi _{\ast }Y)-g_{Q}(\nabla _{X}^{\pi \perp }\delta \alpha \pi
		_{\ast }Y,\delta S_{\delta \pi _{\ast }Y}\pi _{\ast }X) \\ 
		+g_{Q}(\nabla _{X}^{\pi \perp }\delta \alpha \pi _{\ast }Y,C\nabla _{X}^{\pi
			\perp }\delta \pi _{\ast }Y)-g_{Q}(\alpha S_{\delta \pi _{\ast }Y}\pi _{\ast
		}X,B\nabla _{X}^{\pi \perp }\delta \pi _{\ast }Y) \\ 
		-g_{Q}(\delta S_{\delta \pi _{\ast }Y}\pi _{\ast }X,C\nabla _{X}^{\pi \perp
		}\delta \pi _{\ast }Y)%
	\end{array}%
	\right\}
\end{eqnarray*}

which completes the proof.
\end{proof}

\begin{theorem}
Let $\pi $ be a Clairaut slant Riemannian map with $s=e^{f}$ from a Riemannian
manifold $(P,g_{P})$ to a K\"ahler manifold $(Q,g_{Q},J^{\prime })$
such that $\pi $ is totally geodesic. Then we have%
\begin{eqnarray}
	&&\sin ^{4}\theta \left\Vert \pi _{\ast }(\nabla _{X}^{P}Y)\right\Vert
	^{2}+\left\Vert S_{\delta \alpha \pi _{\ast }Y}\pi _{\ast }X\right\Vert
	^{2}+\left\Vert \nabla _{X}^{\pi \perp }\delta \alpha \pi _{\ast
	}Y\right\Vert ^{2}  \label{3.42} \\
	&&+\left\Vert B\nabla _{X}^{\pi \perp }\delta \pi _{\ast }Y\right\Vert
	^{2}+\left\Vert C\nabla _{X}^{\pi \perp }\delta \pi _{\ast }Y\right\Vert ^{2}
	\notag \\
	&\leq &2\left\{ 
	\begin{array}{c}
		-\sin ^{2}\theta g_{Q}(\pi _{\ast }(\nabla _{X}^{P}Y),S_{\delta \alpha \pi
			_{\ast }Y}\pi _{\ast }X)-\sin ^{2}\theta g_{Q}(\pi _{\ast }(\nabla
		_{X}^{P}Y),\alpha S_{\delta \pi _{\ast }Y}\pi _{\ast }X) \\ 
		+\sin ^{2}\theta g_{Q}(\pi _{\ast }(\nabla _{X}^{P}Y),B\nabla _{X}^{\pi
			\perp }\delta \pi _{\ast }Y)+g_{Q}(S_{\delta \alpha \pi _{\ast }Y}\pi _{\ast
		}X,\alpha S_{\delta \pi _{\ast }Y}\pi _{\ast }X) \\ 
		-g_{Q}(S_{\delta \alpha \pi _{\ast }Y}\pi _{\ast }X,B\nabla _{X}^{\pi \perp
		}\delta \pi _{\ast }Y)-g_{Q}(\nabla _{X}^{\pi \perp }\delta \alpha \pi
		_{\ast }Y,\delta S_{\delta \pi _{\ast }Y}\pi _{\ast }X) \\ 
		+g_{Q}(\nabla _{X}^{\pi \perp }\delta \alpha \pi _{\ast }Y,C\nabla _{X}^{\pi
			\perp }\delta \pi _{\ast }Y)-g_{Q}(\alpha S_{\delta \pi _{\ast }Y}\pi _{\ast
		}X,B\nabla _{X}^{\pi \perp }\delta \pi _{\ast }Y) \\ 
		-g_{Q}(\delta S_{\delta \pi _{\ast }Y}\pi _{\ast }X,C\nabla _{X}^{\pi \perp
		}\delta \pi _{\ast }Y)%
	\end{array}%
	\right\}  \notag
\end{eqnarray}

for all $X,Y\in \Gamma (ker\pi _{\ast })^{\perp }.$
\end{theorem}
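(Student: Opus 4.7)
The strategy is to re-use, verbatim, the squared-norm expansion \eqref{3.41} that was derived while proving the previous (unconditional) inequality. Nothing in the derivation of \eqref{3.38} from the K\"ahler identity $(\nabla \pi_*)(X,Y) = -J'\nabla^{Q_\pi}_X J'\pi_*Y - \pi_*(\nabla^P_X Y)$ used the totally geodesic hypothesis, so \eqref{3.38} holds here as well, and squaring it gives exactly \eqref{3.41}: an identity decomposing $\sin^4\theta\,\|(\nabla\pi_*)(X,Y)\|^2$ into (i) the single term $\sin^4\theta\|\pi_*(\nabla^P_X Y)\|^2$, (ii) the four squared norms $\|S_{\delta\alpha\pi_*Y}\pi_*X\|^2 + \|\nabla^{\pi\perp}_X\delta\alpha\pi_*Y\|^2 + \|B\nabla^{\pi\perp}_X\delta\pi_*Y\|^2 + \|C\nabla^{\pi\perp}_X\delta\pi_*Y\|^2$ that appear on the LHS of \eqref{3.42}, and (iii) the long cross-term bracket that appears on the RHS of \eqref{3.42}.

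The next step is to input the extra hypothesis. The totally geodesic condition on $\pi$ says $(\nabla\pi_*)(X,Y) = 0$ for all $X,Y \in \Gamma(TP)$, hence in particular for $X, Y \in \Gamma((\ker\pi_*)^\perp)$. Substituting this into \eqref{3.41} collapses its left-hand side to $0$, and the resulting identity can be rearranged into the claimed bound \eqref{3.42} by transferring the $2\{\cdot\}$ bracket of cross terms to the right-hand side; the \emph{inequality} direction then comes from the trivial nonnegativity of each squared norm collected on the left, which lets us weaken the equality obtained from $\|\cdot\|^2 = 0$ to the one-sided bound as stated.

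The entire argument is algebraic once \eqref{3.41} is in hand: no new use of Theorem \ref{NSC}, Theorem \ref{NSCfor Slant}, K\"ahler compatibility, or the totally geodesic nature of $(range\pi_*)^\perp$ is required, since all of those were already absorbed into the derivation of \eqref{3.38}. The only step that needs genuine care, and which I would expect to be the main obstacle, is \emph{sign bookkeeping} in the cross-term bracket: the bracket in \eqref{3.41} and \eqref{3.42} is long, each term has a definite $\pm$, and one must make sure the signs transfer correctly when the bracket is moved across the equation and when the final inequality is recorded. Beyond this careful accounting, the proof is a two-line corollary of the earlier inequality theorem and the hypothesis $(\nabla \pi_*) \equiv 0$.
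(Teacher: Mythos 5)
Your strategy is exactly the paper's: the entire published proof reads ``Taking into account of Theorem \ref{THR3.4} in \eqref{3.41}, we obtain \eqref{3.42}'', i.e.\ substitute the totally geodesic hypothesis $(\nabla\pi_*)(X,Y)=0$ into the squared-norm identity \eqref{3.41} and rearrange, which is precisely what you propose. So at the level of ideas there is nothing missing.

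However, the one step you leave to ``sign bookkeeping'' is the step that does not go through as you describe it. Write $A$ for the sum of the five squared norms on the left of \eqref{3.42} and $2\Lambda$ for the cross-term bracket (the bracket in \eqref{3.42} is literally the same expression, with the same signs, as the bracket in \eqref{3.41}). Substituting $(\nabla\pi_*)(X,Y)=0$ into \eqref{3.41} gives $0=A+2\Lambda$, hence $A=-2\Lambda$. The bound you are asked to prove is $A\le 2\Lambda$; together with $A=-2\Lambda$ and $A\ge 0$ this forces $A=\Lambda=0$, so nonnegativity of the squared norms cannot ``weaken the equality to the one-sided bound as stated'' --- it is incompatible with it except in the degenerate case. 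What the substitution actually yields is the equality $A=-2\Lambda$, so the conclusion one can legitimately record is $A\le-2\Lambda$ (equivalently, \eqref{3.42} with the overall sign of the bracket reversed). This sign defect is inherited from the paper, whose one-line proof glosses over the same point, and you deserve credit for explicitly flagging the sign transfer as the danger spot; but the justification you offer for the inequality direction (``weaken the equality via nonnegativity'') asserts an implication that is false, and a complete write-up would have to either correct the sign in the statement or explain why $\Lambda\le 0$ here, which the identity itself already shows.
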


\begin{proof}
Taking into account of Theorem \ref{THR3.4} in \eqref{3.41}, we obtain \eqref%
{3.42}. This completes the proof.
\end{proof}
\begin{example}
	Let $P = Q = \mathbb{R}^4$ be Euclidean spaces with Riemannian metrics defined as 
	$$g_P = dx_1^2 + dx_2^2 + dx_3^2 + dx_4^2$$ and $$g_Q = dy_1^2 + dy_2^2 + dy_3^2 + dy_4^2,$$ respectively.\\
	\vspace{0.1cm}
We take the complex structure $J'$ on $Q$ as $J'(a, b, c, d) = (-c, -d, a, b)$. Then a basis of $T_rP$ is $\big\{e_i = \frac{\partial}{\partial x_i}$\} for i = 1...4\\ and basis of $T_{\pi(r)}Q$ is $\big\{e_j' = \frac{\partial}{\partial y_j}\}$ for j = 1...4. \\
	Now, we define a map $\pi : (P, g_P) \rightarrow (Q, g_Q, J')$ by
	$$\pi(x_1, x_2, x_3, x_4) = (\frac{x_1+x_2}{\sqrt{3}}, \frac{x_1+x_2}{\sqrt{6}}, 0, x_4).$$
	Then, we have $$ker\pi_* = \{U_1 = e_1-e_2, U_2 = e_3\}$$ 
	and  $$(ker\pi_*)^\perp = \{X_1 = e_1+e_2, X_2 = e_4\}.$$
Since, we have $g_P(X_i, X_j) = g_Q(\pi_* (X_i) , \pi_* (X_j))$ for i, j = 1, 2, 3, 4.\\ Thus $\pi$ is a Riemannian map and it can be easily seen that $$\pi_*(X_1) = \dfrac{2}{\sqrt{3}}e_1' + \dfrac{2}{\sqrt{6}}e_2'$$ and $$ \pi_*(X_2) = e_4'.$$
	Therefore $$range\pi_* = span\big\{U_1' = \dfrac{2}{\sqrt{3}}e_1' + \dfrac{2}{\sqrt{6}}e_1' , U_2' = e_4'\}$$ and $$(range \pi_*)^\perp = span\big\{X_1'= \dfrac{2}{\sqrt{3}}e_1' - \dfrac{2}{\sqrt{6}}e_1', X_2' = e_3'\}.$$
	Moreover, it is easy to see that $\pi$ is a slant Riemannian map with slant angle $cos^{-1}(\frac{1}{\sqrt{3}}).$
	Now to show $\pi$ is Clairaut Riemannian map we will find smooth function $f$ on $Q$ satisfying\\
	 $$(\nabla\pi_*)(X,X) =	-g_P(X,X)\nabla^Q f$$ for all $X \in \Gamma (ker\pi_*)^\perp.$\\
	 By using \eqref{SFF}, we obtain
	 $(\nabla \pi_*)(X,X) = 0$ for $X = a_1X_1 + a_2X_2,$ where $a_1, a_2 \in \mathbb{R}.$ Also
	 $$\nabla^Qf  = \sum_{i,j = 1}^{4}g^{ij}_Q\dfrac{\partial f}{\partial y_i}\dfrac{\partial}{\partial y_j}$$ $\implies$ $\nabla^Qf = 0$  for a constant function $f$.\\ Then
	 it is easy to verify that $$(\nabla \pi_*)(X,X) = -g_P(X,X)\nabla^Qf$$ for any vector field $X \in \Gamma (ker\pi_*)^\perp$ with a constant function $f$.\\ Thus $\pi$ is Clairaut slant
	 Riemannian map from a Riemannian manifold $(P, g_P)$ to a K\"ahler manifold
	 $(Q, g_Q, J')$.
	
\end{example}

\bigskip

\section{Acknowledgment}

First author is grateful to the financial support provided by CSIR (Council
of science and industrial research) Delhi, India. File
no.[09/1051(12062)/2021-EMR-I]. The second author is thankful to the
Department of Science and Technology(DST) Government of India for providing
financial assistance in terms of FIST project(TPN-69301) vide the letter
with Ref No.:(SR/FST/MS-1/2021/104).\\ 

\textbf{Declarations}

\textbf{Author's Contributions}: All authors contributed equally in this paper. All author read and approved the final manuscript.

\textbf{Funding}: No funding

\textbf{Availability of Data and Materials}: Not applicable.

\textbf{Ethical Approval}: Not required

\textbf{Competing Interests}: Not applicable

\textbf{Conflict of Interest}: The authors declare that they have no competing interest as defined by Springer.

\noindent J. Yadav and G. Shanker\newline
Department of Mathematics and Statistics\newline
Central University of Punjab\newline
Bathinda, Punjab-151401, India.\newline
Email: sultaniya1402@gmail.com; gauree.shanker@cup.edu.in\newline\\
\noindent M. Polat\\
Department of Mathematics, Faculty of Science, Dicle University, Sur, Diyarbak-\\
21280, Turkey. E-mail: murat.polat@dicle.edu.tr


\begin{thebibliography}{99}
\bibitem{S26} Akyol M. A. and \c{S}ahin B., Conformal anti-invariant
Riemannian maps to K\"ahler manifolds, \textit{UPB Scientific Bulletin,
	Series A: Applied Mathematics and Physics}, 80(4), 187-198, 2018.

\bibitem{S32} Akyol M. A. and \c{S}ahin B., Conformal semi-invariant
Riemannian maps to K\"ahler manifolds, \textit{Revista de la Union
	Matematica Argentina}, 60(2), 459-468, 2019.

\bibitem{S14} Allison D., Lorentzian Clairaut submersions, \textit{Geom.
	Dedicata}, 63, 309-319, 1996.

\bibitem{S24} Baird P., and Wood J. C., \textit{Harmonic Morphisms between
	Riemannian Manifolds}, Oxford University Press, London, 2003.

\bibitem{S5} Bishop R. L., Clairaut submersions, \textit{Differential
	geometry (in Honor of K. Yano)}, Kinokuniya, Tokyo, 21-31, 1972.

\bibitem{S15} Chen B. Y., Slant immersions, \textit{Bull. Austral. Math.
	Soc.,} 41(1), 135-147, 1990.

\bibitem{S28} Falcitelli M., Ianus S., Pastore A. M. and Visinescu M., Some
applications of Riemannian submersions in physics, \textit{Revue Roumaine de
	Physique}, 48, 627-639, 2003.

\bibitem{S27} Falcitelli M., Ianus S. and Pastore A. M., \textit{Riemannian
	Submersions and Related Topics}, World Scientific, River Edge, NJ, 2004.

\bibitem{S3} Fischer A. E., Riemannian maps between Riemannian manifolds, 
\textit{Contemp. Math.}, 132, 331-366, 1992.

\bibitem{S35}  Gray A.,  Pseudo-Riemannian almost product manifolds and submersions. \textit{Journal of Mathematics and Mechanics}, 16(7), 715-737, 1967.

\bibitem{S11} Lee J., Park J., \c{S}ahin B. and Song D. Y., Einstein
conditions for the base space of anti-invariant Riemannian submersions and
Clairaut submersions, \textit{Taiwanese J. Math.}, 19(4), 1145-1160, 2015.

\bibitem{Kiran_Thesis} Meena K., Geometry of Riemannian Maps, PhD Thesis,
Banaras Hindu University, 2022.

\bibitem{S25} Meena K. and Yadav A., Clairaut Riemannian maps, \textit{%
	Turkish Journal of Mathematics}, 47(2), 794-815, 2023.

\bibitem{S31} Meena K. and Zawadzki T., Clairaut conformal submersions,
arXiv preprint, arXiv:2202.00393 [math.DG].

\bibitem{S22} Nore T., Second fundamental form of a map, \textit{Annali di
	Matematica puraed applicata}, 146(1), 281-310, 1986.

\bibitem{S10} O'Neill B., The fundamental equations of a submersion, \textit{%
	Michigan Mathematical Journal}, 13(4), 458-469, 1966.

\bibitem{S21} Polat M. and Meena K., Clairaut semi-invariant Riemannian maps
to K\"ahler manifolds, arXiv preprint, arXiv:2303.08108 [math.DG].

\bibitem{S29} \c{S}ahin B., Conformal Riemannian maps between Riemannian
manifolds, their harmonicity and decomposition theorems, \textit{Acta Appl.
	Math.}, 109(3), 829-847, 2010.

\bibitem{S4} \c{S}ahin B., Invariant and anti-invariant Riemannian maps to
K\"ahler manifolds, \textit{Int. J. Geom. Methods Mod. Phys.}, 7(3),
337-355, 2010.

\bibitem{S7} \c{S}ahin B., Semi-invariant Riemannian maps to K\"ahler
manifolds, \textit{Int. J. Geom. Methods Mod. Phys.}, 8(7), 1439-1454, 2011.

\bibitem{S16} \c{S}ahin B., Slant Riemannian maps to K\"ahler manifolds, 
\textit{Int. J. Geom. Methods Mod. Phys.}, 10(2), 1-12, 2013.

\bibitem{S8} \c{S}ahin B., Holomorphic Riemannian maps, \textit{J. Math.
	Phys. Anal. Geom.}, 10(4), 422-430, 2014.

\bibitem{S23} \c{S}ahin B., \textit{Riemannian submersions, Riemannian maps
	in Hermitian geometry, and their applications}, Academic Press: Elsevier,
2017.

\bibitem{S2} \c{S}ahin B., Circles along a Riemannian map and Clairaut
Riemannian maps, \textit{Bulletin of the Korean Mathematical Society},
54(1), 253-264, 2017.

\bibitem{S30} \c{S}ahin B., Notes on Riemannian maps, U. P. B. Sci. Bull.
Series A, 79(3), 131-138, 2017.

\bibitem{S34} Taştan, H. M., Lagrangian submersions from normal almost contact manifolds, \textit{Filomat}, 31(12), 3885-3895, 2017.

\bibitem{S13} Tastan H.M. and Aydin S.G., Clairaut anti-invariant
submersions from cosymplectic manifolds, \textit{Honam Math. J.}, 41(4),
707-724, 2019.

\bibitem{S6} Watson B., Almost Hermitian submersions, \textit{J.
	Differential Geometry}, 11(1), 147-165, 1976.

\bibitem{S19} Yadav A. and Meena K., Clairaut invariant Riemannian maps with
K\"ahler structure, \textit{Turkish Journal of Mathematics}, 46(3),
1020-1035, 2022.

\bibitem{S20} Yadav A. and Meena K., Clairaut anti-invariant Riemannian maps
from K\"ahler manifolds, \textit{Mediterranean Journal of Mathematics},
19(3), 97, 1-19, 2022.

\bibitem{S1} Yano, K. and Kon M., \textit{Structure on Manifolds}. World
Scientific, Singapore 1984.
\end{thebibliography}
\end{document}